\documentclass[preprint, 12pt]{elsarticle}




\usepackage{amssymb}
\usepackage{amsmath}
 \usepackage{amsthm}
 \usepackage{tikz -inet}
\usetikzlibrary{positioning,shapes.misc, patterns,arrows,decorations,decorations.pathreplacing, snakes}

\newtheorem{theorem}{Theorem}
\newtheorem{assumption}{Assumption}
\newtheorem{definition}{Definition}
\newtheorem{proposition}{Proposition}
\newtheorem{fact}{Fact}

\newtheorem{remark}{Remark}
\newtheorem{corollary}{Corollary}
\newcommand{\K}{\mathcal{K}}
\newcommand{\Union}{\bigcup}
\DeclareMathOperator{\tp}{ga-tp}
\DeclareMathOperator{\id}{id}
\DeclareMathOperator{\Aut}{Aut}

\newcommand{\gaS}{\operatorname{ga-S}}
\DeclareMathOperator{\LS}{LS}
\newcommand{\T}{\mathcal{T}}
\newcommand{\C}{\mathfrak{C}}
\newcommand{\Hanf}{\operatorname{Hanf}}

\usepackage{mathtools}

\newcommand{\verteq}{\rotatebox{90}{$\,=$}}
\newcommand{\equalto}[2]{\underset{\scriptstyle\overset{\mkern4mu\verteq}{#2}}{#1}}





\journal{Annals of Pure and Applied Logic}

\begin{document}

\begin{frontmatter}



\title{Superstability and Symmetry}


\author{Monica M. VanDieren\fnref{fn1}}
\fntext[fn1]{The author was partially sponsored for this work by grant
  DMS 0801313  of the \emph{National Science Foundation.}}
\address{Robert Morris University \\  6001 University Blvd \\ Moon Township PA 15108}

\begin{abstract}

This paper continues the study of superstability in abstract elementary classes (AECs) satisfying the amalgamation property.  In particular, we consider the definition of $\mu$-superstability which is based on the local character characterization of superstability from first order logic.
Not only is $\mu$-superstability a potential dividing line in the classification theory for AECs, but it is also a tool in proving instances of Shelah's Categoricity Conjecture.

In this paper, we introduce a formulation, involving towers, of symmetry over limit models for $\mu$-superstable abstract elementary classes.  We use this formulation to gain insight into the problem of the uniqueness of limit models for categorical AECs.


\end{abstract}




\end{frontmatter}



\section{Introduction}
Finding an independence relation for abstract elementary classes (AECs)  that satisfies symmetry is a long-standing problem.  Oftentimes symmetry is just assumed as an axiom (e.g. Shelah's good frames \cite{Sh 600}).   
Up until now, when it is derived, it is usually  under additional  model-theoretic assumptions such as tameness and/or set-theoretic assumptions such as  or the existence of large cardinals.  Some examples of the contexts and assumptions are tameness and the  extra assumption of the extension property \cite{BoGr}; tameness and superstability assumptions above a sufficiently large cardinal \cite{V}; $2^\lambda<2^{\lambda^+}$, weak GCH, and categoricity in several successive cardinalities \cite{Sh 576}; and $L_{\kappa,\omega}$-theories where  $\kappa$ a strongly compact cardinal \cite{MaSh}.  
In all of these examples symmetry is developed through a forking calculus.  We provide a mechanism for deriving symmetry in abstract elementary classes without having to assume set-theoretic assumptions or tameness, and
our methods of using towers to derive symmetry differ from these earlier results.  Additionally, our point of view is localized as we consider only superstability and symmetry in $\mu$ and $\mu^+$.

 In this paper we identify a necessary and sufficient condition for symmetry of non-$\mu$-splitting over limit models (see Definition \ref{sym defn}) that involves reduced towers (Theorem \ref{symmetry theorem}).    This new condition is particularly interesting since it does not have a pre-established first-order analog.  
 
 A preliminary test for our definition of symmetry is to derive it from the assumption of categoricity:
 \begin{corollary}\label{categoricity corollary}
Suppose that $\K$ satisfies the amalgamation and joint embedding properties and $\mu$ is a cardinal $\geq\beth_{(2^{\Hanf(\K)})^+}$.  
If $\K$ is categorical in $\lambda=\mu^+$, then $\K$ has symmetry for non-$\mu$-splitting over limit models.
\end{corollary}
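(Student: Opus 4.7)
The strategy is to argue by contrapositive through Theorem~\ref{symmetry theorem}: assume that symmetry for non-$\mu$-splitting over limit models fails and use the resulting reduced tower to construct two non-isomorphic models of cardinality $\mu^+$, contradicting categoricity in $\lambda=\mu^+$.

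First I would verify that $\mu$-superstability is in force, so that the definition of symmetry over limit models even applies at $\mu$. Under amalgamation, joint embedding, and the Hanf-style bound $\mu \geq \beth_{(2^{\Hanf(\K)})^+}$, categoricity in the successor cardinal $\mu^+$ is the classical Shelah--Villaveces setup; it yields stability in $\mu$ together with the local character of non-$\mu$-splitting. This step is routine given the hypothesis but must be called out explicitly to justify invoking the symmetry framework.

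Next, suppose symmetry for non-$\mu$-splitting over limit models fails. By Theorem~\ref{symmetry theorem}, there is a reduced tower $\T$ of length some limit ordinal $\alpha<\mu^+$ in models of cardinality $\mu$ witnessing the failure (either the tower is not continuous at some limit stage, or its union fails to be the type of amalgamation base required by the characterization). I would then extend $\T$ in two genuinely different ways to reach cardinality $\mu^+$: one extension that continues along the pathology of $\T$, and a second built from a continuous chain of universal extensions. Each extension yields a limit model of size $\mu^+$, and the discontinuity recorded inside $\T$ should distinguish them as non-isomorphic.

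The two non-isomorphic models of cardinality $\mu^+$ contradict categoricity in $\mu^+$, completing the proof. The main obstacle is precisely this last non-isomorphism step: converting a purely local, type-theoretic pathology at cardinality $\mu$ into a global invariant that no $\K$-isomorphism between the two $\mu^+$-sized models can match. The standard route is to use $\mu$-superstability to track non-$\mu$-splitting along both constructions, so that the failure of continuity in $\T$ forces one of the resulting models to realize a non-algebraic type over an internal $\mu$-chain that the other cannot realize; executing this carefully, with the bookkeeping required to keep the reduced tower inside the eventual $\mu^+$-model, is where the real work lies.
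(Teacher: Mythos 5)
You have the right high-level outline (categoricity gives $\mu$-superstability, and the heart of the matter is the equivalence in Theorem~\ref{symmetry theorem}), but the route you propose has a real gap, and it turns out to be unnecessary. You suggest arguing by contrapositive: suppose symmetry fails, extract a discontinuous reduced tower from Theorem~\ref{symmetry theorem}, and then build from it two non-isomorphic models of cardinality $\mu^+$, contradicting categoricity. You yourself flag the non-isomorphism step as "where the real work lies," and indeed it is: a discontinuity in a single reduced tower is a very local pathology in $\K_\mu$, and it is not at all clear how to promote it to a $\K$-invariant that separates two models of size $\mu^+$. Nothing in the paper (or in the cited background) supplies that lifting, so as written your argument has a hole exactly where you acknowledge it does.

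The paper avoids this difficulty entirely by a simpler observation: under stability in $\mu$ there is a saturated model of cardinality $\mu^+$, so categoricity in $\mu^+$ forces \emph{every} model of cardinality $\mu^+$ to be saturated. In particular, the hypothesis of Theorem~\ref{transfer theorem} — that any union of a chain of saturated models of cardinality $\mu^+$ is saturated — holds trivially, since such a union has cardinality $\mu^+$ and hence is isomorphic to the unique (saturated) model. Theorem~\ref{transfer theorem} then gives that reduced towers are continuous, and Theorem~\ref{symmetry theorem}~(\ref{red})$\Rightarrow$(\ref{sym}) yields symmetry. Together with your correct opening step (Shelah's \cite{Sh 394} giving Assumption~\ref{ss assm} from categoricity above the Hanf bound), that is the whole proof; no construction of non-isomorphic $\mu^+$-models is needed.
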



This examination of symmetry occurs in the context of superstable abstract elementary classes.  The sharp dividing line for superstability in the classification theory of AECs is still uncertain in general.  Grossberg and Vasey discuss the different shades of superstability and prove that under tameness they are all equivalent \cite{GV}.  This paper helps to illuminate the dividing line in classes that may fail to be tame by considering the local character characterization of $\mu$-superstability (see Assumption \ref{ss assm}).  
In particular, we 
extend the work of \cite{ShVi}, \cite{Va1}, \cite{Va2}, and \cite{GVV} 
 to derive the uniqueness of limit models of cardinality $\mu$  in $\mu$-superstable AECs which satisfy $\mu$-symmetry over limit models.

The series of papers \cite{ShVi}, \cite{Va1}, \cite{Va2}, and \cite{GVV} aims to verify the still-open conjecture that the uniqueness of limit models of cardinality $\mu$ is equivalent to superstability.  
Grossberg and Boney take a different approach to the uniqueness of limit models  under the additional assumptions of tameness and a symmetry condition \cite{BoGr}.
We refer the reader to \cite{GVV} and \cite{Va1} for a literature review on the  uniqueness of limit models in abstract elementary classes.  Briefly, the uniqueness of limit models has been used as a step to prove instances of Shelah's Categoricity Conjecture (e.g. \cite{Sh 394}, \cite{GV1}, \cite{GV2}), 
as a mechanism for finding (Galois-)saturated models in singular cardinals \cite{Sh 394},
as a tool to derive the amalgamation property \cite{KoSh}, and as a lens to examine strictly stable AECs \cite{BV}.

The organization of the proofs in the series  \cite{ShVi}, \cite{Va1}, \cite{Va2}, and \cite{GVV}  is to transfer some model-theoretic structure from assumptions in a higher cardinality down to $\mu$. 
Here, in Theorem \ref{main theorem}, we extend this line of reasoning by weakening the structural assumptions on $\K_{\mu^+}$.

 \begin{theorem}\label{main theorem}
 Let $\K$ be a $\mu$-stable abstract elementary class satisfying the amalgamation and joint embedding properties.  Suppose $\K$  satisfies the locality and continuity properties of $\mu$-splitting (see Assumption \ref{ss assm}) and also satisfies the property that any union of saturated models of cardinality $\mu^+$ is saturated.  For $N\in \K_\mu$ and  limit ordinals, $\theta_1, \theta_2<\mu^+$, if $M_l$ is $(\mu,\theta_l)$-limit model for $l\in\{1,2\}$, then $M_1$ and $M_2$ are isomorphic over $N$.
 
 \end{theorem}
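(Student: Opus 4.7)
The plan is to follow the reduction strategy of \cite{ShVi}, \cite{Va1}, \cite{Va2}, and \cite{GVV}: derive $\mu$-symmetry for non-$\mu$-splitting over limit models from the hypothesis on unions of $\mu^+$-saturated models, and then use symmetry to build an $N$-isomorphism between $M_1$ and $M_2$ via a back-and-forth along reduced towers.

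First I would reduce to the case that $\theta_1$ and $\theta_2$ are regular. Given a $(\mu,\theta)$-limit model with defining chain $\langle N_i : i<\theta\rangle$, restricting to a cofinal sub-chain of order type $\cf(\theta)$ and observing that the successor links remain universal extensions exhibits the same union as a $(\mu,\cf(\theta))$-limit over $N$. Furthermore, $(\mu,\theta)$- and $(\mu,\theta')$-limits over $N$ with $\cf(\theta)=\cf(\theta')$ are isomorphic over $N$ by a standard back-and-forth using universality of successor extensions. So it remains to treat $\theta_1<\theta_2$ with both regular.

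Next I would derive $\mu$-symmetry from the hypothesis. By Theorem \ref{symmetry theorem}, $\mu$-symmetry is equivalent to a property of reduced towers in $\K_\mu$ (essentially, continuity of reduced towers at limit stages). Suppose this property fails and is witnessed by a reduced tower of length $\delta<\mu^+$. I would then propagate the failure into $\K_{\mu^+}$: iterate the offending construction to build an increasing chain $\langle M^*_\alpha : \alpha<\mu^+\rangle$ of $\mu^+$-saturated models (each arranged to be saturated using $\mu$-superstability together with the locality and continuity properties of $\mu$-splitting from Assumption \ref{ss assm}) inside which the failure recurs cofinally in $\mu^+$. The hypothesis forces $\bigcup_{\alpha<\mu^+} M^*_\alpha$ to be saturated, whereas the iterated failure produces a Galois type over this union that is omitted, yielding the desired contradiction.

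Finally, with $\mu$-symmetry in hand, I would build reduced towers over $N$ of lengths $\theta_1$ and $\theta_2$. By Theorem \ref{symmetry theorem} both towers are continuous, so their unions are $(\mu,\theta_1)$- and $(\mu,\theta_2)$-limit models over $N$ respectively. A directed back-and-forth matching the two continuous reduced towers (using universality at successor stages and continuity at limits) produces an $N$-isomorphism between these unions; combined with Step~1 this gives $M_1\cong_N M_2$. The main obstacle is Step~2 -- lifting a single $\mu$-level obstruction to a genuine failure of saturation in $\mu^+$. One must verify that each $M^*_\alpha$ in the $\mu^+$-chain is actually $\mu^+$-saturated so the hypothesis applies, and that the local obstruction in $\K_\mu$ survives the iteration to produce a type over $\bigcup_\alpha M^*_\alpha$ that is not realized there. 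The continuity property of $\mu$-splitting from Assumption \ref{ss assm} is precisely what allows the non-splitting witnesses to track upward from $\mu$ to $\mu^+$ without degeneration along the construction.
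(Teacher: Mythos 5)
Your high-level plan --- use the saturation hypothesis to establish continuity of reduced towers, then deduce uniqueness of limit models --- matches the paper's, but two of your three steps have substantive problems.

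In Step 2 your mechanism runs backwards. The paper's proof of Theorem \ref{transfer theorem} does not exhibit an omitted type over the big union; it uses saturation \emph{positively}. Starting from a discontinuous reduced tower $(\bar M,\bar a,\bar N)$ of minimal length $\delta+1$, with $b\in M_\delta\setminus\Union_{i<\delta}M_i$ witnessing discontinuity, one builds a $<$-increasing $\mu^+$-chain of continuous reduced extensions of $(\bar M,\bar a,\bar N)\restriction\delta$. The columns $\Union_{i<\mu^+}M^i_\beta$ form an increasing chain of saturated models in $\K_{\mu^+}$, so by the hypothesis their union $\check M$ is saturated. Saturation is then used to \emph{realize} $\tp(b/\Union_{\beta<\delta}M_\beta)$ by some $\check b\in\check M$; an automorphism fixing $\Union_{\beta<\delta}M_\beta$ sends $\check b$ to $b$, pulling an entire extension tower along with it and producing an extension of $(\bar M,\bar a,\bar N)$ in which $b$ already appears in some $M'_{l+1}$ with $l+1<\delta$. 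The contradiction is with reducedness, not with saturation. Your proposal to show that "the iterated failure produces a Galois type over this union that is omitted" cannot work as stated: $\check M$ is saturated by hypothesis and $\Union_{\beta<\delta}M_\beta$ has size $\mu<\mu^+$, so no type over it is omitted; indeed, the existence of the realization is exactly the engine of the proof.

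In Step 3 a "directed back-and-forth matching the two continuous reduced towers" of lengths $\theta_1<\theta_2$ is too naive: the towers have genuinely different lengths and cofinalities, and bridging this mismatch is the entire difficulty of the uniqueness theorem. The paper does not attempt a direct back-and-forth at all; it invokes the proof of Theorem 1.9 of \cite{GVV}, which builds a two-dimensional array of towers using relatively full towers so that the union is simultaneously a $(\mu,\theta_1)$-limit (reading one direction) and a $(\mu,\theta_2)$-limit (reading the other). Once continuity of reduced towers is established, that machinery applies verbatim; you cannot replace it with an off-the-shelf back-and-forth. Your Step 1 reduction to regular cofinalities is correct but is also subsumed by the GVV argument. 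A final remark: you need not route through symmetry at all --- the paper proves continuity of reduced towers directly in Theorem \ref{transfer theorem} and derives symmetry afterward via Theorem \ref{symmetry theorem}, treating it as an equivalent reformulation rather than an intermediate lemma.
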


More interestingly, the proof of Theorem \ref{main theorem} leads to the isolation of the question of whether or not every reduced tower is continuous.  Here we see that this is equivalent to asking whether or not the abstract elementary class has a non-$\mu$-splitting relation that satisfies symmetry over limit models.  This provides some insight into the strength of the assumptions that might be required to prove the uniqueness of limit models.

Later research has established that our formulation of symmetry is equivalent to other existing forms and that symmetry for non-$\mu$-splitting over limit models follows from $\mu$-superstability in tame AECs \cite{VV}.
 Our formulation of symmetry has also been used to  derive the following results:
  $\mu$-superstability implies the uniqueness of limit models in tame AECs \cite[Corollay 1.4]{VV}; the union of an increasing chain of $\mu^+$-saturated models is $\mu^+$-saturated in $\mu$- and 
  $\mu^+$-superstable classes that satisfy symmetry for non-$\mu^+$-splitting over limit models \cite[Theorem 1]{Va3}; and some level of uniqueness of limit models can be recovered in strictly $\mu$-stable AECs which satisfy a weakening of Definition \ref{sym defn} \cite[Theorem 1]{BV}. Moreover, Vasey has used our results to lower the bound of Shelah's Downward Categoricity Transfer Theorem \cite[Corollary 7.12]{Vas-categoricity}.

The author would like to thank John Baldwin and Chris Laskowski for conversations about symmetry for non-splitting after her presentation of an early version of Theorem \ref{symmetry theorem} at the BIRS Neo-stability Theory Workshop in 2012.
The author is also grateful for constructive and valuable feedback from Rami Grossberg, Sebastien Vasey, Will Boney, and the referee on earlier drafts of this paper.

\section{Background}
Much of the notation and definitions used here is from \cite{GVV}.  We remind the reader of a few of these.  We fix $\K$ an abstract elementary class with relation $\prec_{\K}$.  For $\mu$ a cardinal $\geq$ the L\"{o}wenheim-Skolem number of the class ($\LS(\K)$), we write $\K_\mu$ for the class of models of cardinality $\mu$.  Much of our focus will be on limit models. For $\alpha$ a limit ordinal $<\mu^+$, a  \emph{$(\mu,\alpha)$-limit model} is a model $M$ which is the union of an increasing sequence $\langle M_i\in\K_\mu\mid i<\alpha\rangle$ for which $M_{i+1}$ is universal over $M_i$ (in other words for every $N\in\K_\mu$ with $M_i\prec_{\K}N$ there exists a $\K$-embedding $f:N\rightarrow M_{i+1}$ with $f\restriction M_i=\id_{M_i}$).  
In this case we also say that $M$ is a $(\mu,\alpha)$-limit model over $M_0$.
When $\alpha$ is not important or is otherwise clear, we abbreviate $(\mu,\alpha)-$limit model by limit model.  We write $\K^*_\mu$ for $$\{M\in\K_\mu\mid\exists\alpha<\mu^+\text{a limit ordinal so that }M\text{ is a }(\mu,\alpha)-\text{limit model}\}.$$

We improve the results of \cite{Va2} and \cite{GVV}  by weakening the assumptions used to show that reduced towers are continuous (see Theorem \ref{transfer theorem}).  
Let us establish those assumptions now.

For the remainder of this paper, we assume that $\K$ is an abstract elementary class satisfying the joint embedding and amalgamation properties and $\K$ has no maximal models of cardinality $\mu^+$.  Therefore we can fix $\C$ a homogeneous (monster-like) model of cardinality $\geq\mu^+$.
We will assume that the class $\K$ satisfies $\mu$-superstability assumptions laid out in \cite{GVV}.  Namely:
\begin{assumption}\label{ss assm}
\mbox{}
\begin{enumerate}
\item $\K$ is (Galois-)stable in $\mu$.
\item\label{split assm} $\mu$-splitting in $\K$ satisfies the following
  locality (sometimes called continuity) and ``no long splitting chains''
  properties.
%
%
For all infinite $\alpha$, for every sequence $\langle M_i\mid i<\alpha\rangle$ of
  limit models of cardinality $\mu$ with $M_{i+1}$ universal over $M_i$ and for every $p\in\gaS(M_\alpha)$, where
  $M_\alpha=\bigcup_{i<\alpha}M_i$, we have that
\begin{enumerate}
\item\label{locality} If for every $i<\alpha$, the type $p\restriction
  M_i$ does not $\mu$-split over $M_0$, then $p$ does not $\mu$-split over
  $M_0$.
\item\label{no long splitting chain} There exists $i<\alpha$ such that $p$
does not $\mu$-split over $M_i$.
\end{enumerate}
\end{enumerate}

\end{assumption}

\begin{remark}
Vasey has pointed out that by the weak-transitivity property of non-splitting (i.e. Proposition 3.7 of \cite{V2}), Assumption \ref{ss assm}.\ref{split assm}.\ref{locality} actually follows from Assumption \ref{ss assm}.\ref{split assm}.\ref{no long splitting chain}. 
\end{remark}

Note that the arguments presented here also work in classes that do not necessarily satisfy the amalgamation property such as the context covered in \cite{ShVi}, but for ease of readability, we work in the more popular setting in which the amalgamation property holds.    For details of working without the amalgamation property see \cite{V-Volume}.

Recall that with the amalgamation property along with $\mu$-stability, we can find a saturated model $M\in\K_{\mu^+}$, and  we can find  for every $N\in\K_{\mu}$, a model  $N'\in\K_{\mu}$ which is universal over $N$.  Thus it makes sense for us to work with limit models.  Furthermore, $\mu$-stability implies the existence of non-$\mu$-splitting types:
\begin{fact}[Theorem I.4.10 of \cite{Va1}]\label{splitting extension lemma}
Let $M,N,M^*$ be models in $\K_\mu$.  Suppose that $M$ is universal
over $N$ and that $M^*$ is an extension of $M$.  If a type $p=\tp(a/M)$
does not $\mu$-split over $N$ then there exists an automorphism $g$ of
$\C$ fixing $M$ such that $\tp(g(a)/M^*)$ does not $\mu$-split over
$N$ and $\tp(g(a)/M)=p$.
\end{fact}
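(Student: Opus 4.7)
The statement says that a type $p$ over $M$ not $\mu$-splitting over $N$ has an $\Aut(\C/M)$-conjugate whose restriction to $M^*$ still does not $\mu$-split over $N$. The cleanest reformulation: it suffices to produce some realization $b$ of $p$ such that $\tp(b/M^*)$ does not $\mu$-split over $N$, because then any $g \in \Aut(\C/M)$ sending $a$ to $b$ witnesses the conclusion. My plan is to build $b$ by pulling $p$ back along an $N$-embedding of $M^*$ into $M$, which is available precisely because $M$ is universal over $N$.

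Concretely, since $M^* \in \K_\mu$ and $M$ is universal over $N$, fix a $\K$-embedding $f : M^* \to M$ with $f \restriction N = \id_N$. Using the $\mu^+$-homogeneity of the monster $\C$, extend $f$ to $\hat f \in \Aut(\C/N)$, and set $M' := \hat f(M^*) \prec_{\K} M$. Put $b := \hat f^{-1}(a)$. Then $\hat f$ carries the pair $(b, M^*)$ to the pair $(a, M')$ while fixing $N$ pointwise. Since $\tp(a/M') = p \restriction M'$ is a restriction of $p$, it does not $\mu$-split over $N$, and since $\hat f^{-1}$ is an $N$-preserving automorphism, the transported type $\tp(b/M^*) = \hat f^{-1}(\tp(a/M'))$ also does not $\mu$-split over $N$. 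This is the easy half.

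The remaining, more delicate step is to verify $\tp(b/M) = p$, so that we can pick $g \in \Aut(\C/M)$ with $g(a) = b$. Restricting, $\tp(b/M)$ does not $\mu$-split over $N$, and since $\hat f$ fixes $N$ we have $\tp(b/N) = \tp(a/N) = p \restriction N$. Thus $\tp(b/M)$ and $p$ are two non-$\mu$-splitting extensions of the same type over $N$ to a model $M$ that is universal over $N$. The main obstacle is the uniqueness of such extensions: one shows that if $q_1, q_2 \in \gaS(M)$ both fail to $\mu$-split over $N$ and agree on $N$, then $q_1 = q_2$. The proof uses universality of $M$ over $N$ to map any witness of disagreement into $M$ via an $N$-fixing embedding, and then applies the definition of $\mu$-non-splitting to derive a contradiction. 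Granting this uniqueness, $\tp(b/M) = p$, and choosing any $g \in \Aut(\C/M)$ with $g(a) = b$ finishes the argument. The two non-trivial ingredients, then, are the universality-based embedding of $M^*$ into $M$ and the uniqueness of non-$\mu$-splitting extensions over universal models; the rest is bookkeeping with $\hat f$ and its restriction to $N$.
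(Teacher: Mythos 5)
Your overall strategy is the standard one and is sound in outline: use universality of $M$ over $N$ to get an $N$-embedding $f:M^*\to M$, extend to $\hat f\in\Aut(\C/N)$, set $b:=\hat f^{-1}(a)$, check that $\tp(b/M^*)$ does not $\mu$-split over $N$, check that $\tp(b/M)=p$, and finish by homogeneity. The first check is correct as you argue it (conjugating an $N$-isomorphism of submodels of $M^*$ by $\hat f$ to get one inside $M$). The gap is in the second check. You reduce it to the claim that if $q_1,q_2\in\gaS(M)$ both fail to $\mu$-split over $N$ and $q_1\restriction N=q_2\restriction N$, with $M$ universal over $N$, then $q_1=q_2$. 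That is \emph{not} the standard uniqueness of non-$\mu$-splitting extensions, which requires agreement over an intermediate model $M_0$ with $N\prec M_0\prec M$ and $M_0$ universal over $N$, not merely agreement over $N$. Your sketch (``map any witness of disagreement into $M$'') does not show how to get that: taking $h:M\to M$ over $N$, non-$\mu$-splitting reduces $q_i$ to $q_i\restriction h(M)$, but agreement over $N$ gives no control over $q_1\restriction h(M)$ versus $q_2\restriction h(M)$, and I do not believe the stronger form holds in general.

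The good news is that no uniqueness lemma is needed at all; $\tp(b/M)=p$ follows in one step from the hypothesis that $p$ itself does not $\mu$-split over $N$. Indeed $\hat f(M)\preceq\hat f(M^*)\preceq M$ and $N\preceq\hat f(M)$, so $\hat f^{-1}\restriction\hat f(M):\hat f(M)\cong_N M$ is an $N$-isomorphism between two submodels of $M$. Applying the definition of non-$\mu$-splitting of $p$ over $N$ to this pair yields $\bigl(\hat f^{-1}\restriction\hat f(M)\bigr)\bigl(\tp(a/\hat f(M))\bigr)=\tp(a/M)$, and since $\hat f^{-1}$ extends this partial map, the left side is $\tp(\hat f^{-1}(a)/M)=\tp(b/M)$. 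Hence $\tp(b/M)=p$ directly. (If you prefer to route through uniqueness, the same computation applied to $\tp(b/M^*)$ shows $\tp(b/\hat f(M))=\tp(a/\hat f(M))$, and $\hat f(M)$ \emph{is} universal over $N$, so the standard uniqueness now applies; but the direct route is shorter.) With this replacement for the uniqueness step your proof is complete.
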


A tower -- a construct which combines chains of limit models with non-splitting types -- is the main mechanism that is used to prove the uniqueness of limit models. 
A \emph{tower} is a sequence of length $\alpha$ of limit models, denoted by $\bar M=\langle M_i\in\K^*_\mu\mid i<\alpha\rangle$, along with a sequence of designated elements $\bar a=\langle a_{i}\in M_{i+1}\backslash M_i\mid i+1<\alpha\rangle$ and a sequence of designated submodels $\bar N=\langle N_{i}\mid i+1<\alpha\rangle$ for which
 $M_i\prec_{\K}M_{i+1}$, $\tp(a_i/M_i)$ does not $\mu$-split over $N_i$, and $M_i$ is universal over $N_i$ (see Definition I.5.1 of \cite{Va1}).  
 We refer to $\alpha$ as the length of the tower.
 The class of all towers indexed by an ordinal $\alpha$ and made up of limit models of cardinality $\mu$ is denoted by $\K^*_{\mu,\alpha}$.
 
 Notice that the sequence $\bar M$ in the definition of tower is not required to be continuous (by \emph{continuous} we mean that for every limit ordinal $i<\alpha$ $M_i=\Union_{j<i}M_j$).  In fact, many times we will not have continuous towers. For instance, 
 discontinuous towers arise in Fact \ref{thm:extension for towers} when we find extensions of towers.  We should make this definition of extension more explicit since it will be used later in this paper.  This definition of extension appears in \cite{ShVi} and \cite{Va1} where the notation $<^c$ was used to distinguish it from other versions of extension.

\begin{definition}
For towers $(\bar M,\bar a,\bar N)$ and $(\bar M',\bar a',\bar N')$ in $\K^*_{\mu,\alpha}$, we say $$(\bar M,\bar a,\bar N)\leq (\bar M',\bar a',\bar N')$$ if $\bar a=\bar a'$, $\bar N=\bar N'$, $M_i\preceq_{\K}M'_i$, and whenever $M'_i$ is a proper extension of $M_i$, then $M'_i$ is universal over $M_i$.  If for each $i<\alpha$,  $M'_i $ is universal over $M_i$ we will write $(\bar M,\bar a,\bar N)< (\bar M',\bar a',\bar N')$, and we will refer to this relationship between towers as proper extension.
\end{definition}

Notice that even if $(\bar M,\bar a,\bar N)$ is continuous, it may be that the only extensions of $(\bar M,\bar a,\bar N)$ are discontinuous towers; though, we will rule this out in some contexts.  In particular, we will see that when Assumption \ref{ss assm} and $\mu$-symmetry over limit models hold, then all towers -- whether discontinuous or continuous --  have continuous proper extensions (Fact \ref{density of reduced} combined with Theorem \ref{symmetry theorem}).

Before proceeding, we need to establish some notation about towers that will simplify the presentation. We will often abbreviate the tower $(\bar M,\bar a,\bar N)$ by $\T$.  
If $(\bar M,\bar a,\bar N)$ is a tower and $f$ is an automorphism of $\C$, we will write $f(\T)$ for the image of the tower under $f$.  We often construct towers in stages and  will have to refer to towers of varying length and to initial segments of towers.  Therefore for a tower $\T=(\bar M,\bar a,\bar N)\in\K^*_{\mu,\alpha}$ and for $\beta$ an ordinal $<\alpha$, we write $\T\restriction\beta$ or $(\bar M,\bar a,\bar N)\restriction\beta$ for the tower made up the sequences $\langle M_i\mid i<\beta\rangle$, $\langle a_i\mid i+1<\beta\rangle$ and $\langle N_i\mid i+1<\beta\rangle$.

We also need to  recall a few facts about directed system of partial extensions of towers from \cite{Va1}.  We restate them here in the form they will be used.  The first fact will get us through the successor step of directed system inductive constructions. 
\begin{fact}\label{direct limit prop}
Suppose $\T$ is a tower in $\K^*_{\mu,\alpha}$ and $\T'$ is a tower of length $\beta<\alpha$ with $\T\restriction \beta<\T'$, if $f\in\Aut_{M_\beta}(\C)$ and $M''_\beta$ is a limit model universal over $M_{\beta}$ such that $\tp(a_\beta/M''_\beta)$ does not $\mu$-split over $N_\beta$ and $f(\Union_{i<\beta}M'_i)\prec_{\K}M''_\beta$, then the tower $\T''\in\K^*_{\mu,\beta+1}$ defined by $f(\T')$ concatenated with the model $M''_\beta$, element $a_\beta$ and submodel $N_\beta$ is an extension of $\T\restriction (\beta+1)$.
\end{fact}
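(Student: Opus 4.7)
The statement is a bookkeeping fact: we are given an explicit recipe for $\T''$ and must verify that it satisfies the tower axioms and that it dominates $\T\restriction(\beta+1)$ in the extension preorder. The plan is to break the verification into three routine steps.

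First, I would show that $f(\T')$ is itself a valid tower of length $\beta$. The key observation is that $f\in\Aut_{M_\beta}(\C)$ fixes $M_\beta$ pointwise, and for each $i$ with $i+1<\beta$ we have $a_i\in M_{i+1}\prec_\K M_\beta$ and $N_i\prec_\K M_i\prec_\K M_\beta$; hence $f(a_i)=a_i$ and $f(N_i)=N_i$ (the latter setwise, in fact pointwise). Thus $f(\T')$ has the same distinguished $\bar a$ and $\bar N$ as $\T'$, with model sequence $\langle f(M'_i):i<\beta\rangle$, and the tower axioms (chain condition; non-$\mu$-splitting of $\tp(a_i/f(M'_i))$ over $N_i$; universality of $f(M'_i)$ over $N_i$) transport immediately from $\T'$ along the isomorphism $f$.

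Second, I would verify the concatenation at the new top index $\beta$. By coherence, $f(M'_i)\prec_\K f(\bigcup_{j<\beta}M'_j)\prec_\K M''_\beta$ for each $i<\beta$, so the extended model sequence is $\prec_\K$-increasing, and each model is a limit model of cardinality $\mu$. The non-splitting datum at position $\beta$ is exactly the hypothesis that $\tp(a_\beta/M''_\beta)$ does not $\mu$-split over $N_\beta$, and universality of $M''_\beta$ over $N_\beta$ follows by transitivity of universality from $M''_\beta$ being universal over $M_\beta$ (given) together with $M_\beta$ being universal over $N_\beta$ (part of the tower $\T$).

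Third, for the relation $\T\restriction(\beta+1)\leq\T''$, I would observe that $\bar a$ and $\bar N$ agree by construction; for $i<\beta$, the model $f(M'_i)$ extends $M_i$ (since $f$ fixes $M_i$ pointwise and $M_i\prec_\K M'_i$) and remains universal over $M_i$ by transport of the corresponding property from $\T'$; and at position $\beta$, universality of $M''_\beta$ over $M_\beta$ is hypothesized. The only delicate aspect is keeping track of precisely which parameters $f$ fixes, which is where the hypothesis $f\in\Aut_{M_\beta}$ (rather than, say, $\Aut_{M_0}$) earns its keep; beyond that, the proof is a direct unpacking of the definitions together with transitivity of universality.
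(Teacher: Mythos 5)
Your verification is correct and matches the paper's (implicit) approach. The paper itself dismisses this fact with the single sentence that it ``follows from the definition of tower extensions,'' so your proposal is essentially filling in what the paper leaves as routine; the three-step decomposition (transport of the tower axioms through $f$ using the fact that $f$ fixes $\bar a$ and $\bar N$ pointwise because they lie in $M_\beta$; verification of the tower axioms at the new top index using the given hypotheses; checking the extension ordering) is exactly the intended unpacking. The only place to be slightly careful is that ``universality of $M''_\beta$ over $N_\beta$'' does not actually need transitivity of universality: since $M_\beta$ is universal over $N_\beta$ and $M_\beta\prec_{\K}M''_\beta$, universality of $M''_\beta$ over $N_\beta$ is a direct monotonicity statement (any embedding landing in $M_\beta$ already lands in $M''_\beta$); transitivity would be needed only if you instead wanted universality of $M''_\beta$ over $N_\beta$ witnessed by a chain, which is not required here. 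Beyond that minor point, nothing is missing, and your emphasis on why $f\in\Aut_{M_\beta}(\C)$ rather than $\Aut_{M_0}(\C)$ is what makes the bookkeeping go through is precisely the correct thing to highlight.
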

Fact \ref{direct limit prop} follows from  the definition of tower extensions.  

The next fact describes how to pass through the limit stages. For completeness, we provide its proof.

\begin{fact}\label{limit stage prop}
Fix $\T\in\K^*_{\mu,\alpha}$ for $\alpha$ a limit ordinal.
Suppose $\langle \T^i\in\K^*_{\mu,i}\mid i<\alpha\rangle$  and $\langle f_{i,j}\mid i\leq j<\alpha\rangle$ form a directed system of towers that satisfy the following conditions:
\begin{enumerate}
\item each $\T^i$ extends $\T\restriction i$
\item\label{identity condition} $f_{i,j}\restriction M_i=id_{M_i}$
\item\label{universal over condition} $f_{i,i+1}(\Union_{l<i}M^i_l)\prec_{\K}M^{i+1}_i$.
\end{enumerate}
Then there exists a direct limit $\T^\alpha$ and mappings $\langle f_{i,\alpha}\mid i<\alpha\rangle$ to this system so that $\T^\alpha\in\K^*_{\mu,\alpha}$; $\T<\T^\alpha$; and $f_{i,\alpha}\restriction M_i=id_{M_i}$.  
\end{fact}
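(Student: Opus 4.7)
The plan is to build $\T^\alpha$ as the direct limit of the given directed system of towers, position by position, and then to verify the tower axioms for the resulting sequence. For each $j<\alpha$ I would define $M^\alpha_j$ as the direct limit of the coherent system $\langle M^i_j,\ f_{i,i'}\restriction M^i_j : j<i\leq i'<\alpha\rangle$, using the direct-limit axiom for AECs, and take $f_{i,\alpha}\colon M^i_j\to M^\alpha_j$ to be the canonical embeddings into the direct limit. A cardinality count (each $M^i_j$ has size $\mu$, the chain has length $\leq |\alpha|\leq\mu$, and $M^{j+1}_j$ embeds into $M^\alpha_j$) gives $M^\alpha_j\in\K_\mu$; coherence yields $M^\alpha_j\preceq_{\K} M^\alpha_{j'}$ whenever $j<j'<\alpha$; and condition (\ref{identity condition}) propagates through composition of the direct system to $f_{i,\alpha}\restriction M_i=\id_{M_i}$. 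The designated elements and submodels of $\T^\alpha$ are inherited from $\T$: set $a^\alpha_j:=a_j$ and $N^\alpha_j:=N_j$, which is consistent because each $\T^i$ extends $\T\restriction i$ and so has $a^i_j=a_j$ and $N^i_j=N_j$.

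I would next verify that each $M^\alpha_j$ lies in $\K^*_\mu$. The $\leq$-relation on towers forces every successive pair $f_{i,i+1}(M^i_j)\preceq_{\K} M^{i+1}_j$ to be either the identity or a universal extension. If the chain eventually stabilizes at some $i_0$, then $M^\alpha_j\cong M^{i_0}_j\in\K^*_\mu$; otherwise $M^\alpha_j$ is the union of a strictly increasing chain of limit models with universal successive extensions and so is itself a limit model. Universality of $M^\alpha_j$ over $N_j$ then follows from the corresponding property of $M^{j+1}_j$. For the non-splitting clause, each tower $\T^i$ already yields that $\tp(a_j/M^i_j)$ does not $\mu$-split over $N_j$; applying Assumption \ref{ss assm}.\ref{split assm}.\ref{locality} to the chain $\langle f_{i,\alpha}(M^i_j):j<i<\alpha\rangle$ sitting inside $M^\alpha_j$ upgrades this to the required non-splitting of $\tp(a_j/M^\alpha_j)$ over $N_j$.

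Finally, to conclude $\T<\T^\alpha$ I need $M^\alpha_j$ universal over $M_j$ for each $j<\alpha$. Condition (\ref{universal over condition}) applied at the diagonal index $i=j$ says $M^{j+1}_j$ is universal over $f_{j,j+1}(\bigcup_{l<j}M^j_l)$; the identity-fixing property (\ref{identity condition}), combined with the tower extensions $\T\restriction j\leq\T^j$, lets me locate $M_j$ inside this universal extension so that $M^{j+1}_j$ is in fact universal over $M_j$, and pushing this forward by $f_{j+1,\alpha}$ transfers universality to $M^\alpha_j$. The main obstacle I anticipate is ensuring that the chain $\langle f_{i,\alpha}(M^i_j):j<i<\alpha\rangle$ to which I apply the locality of non-splitting really consists of limit models with universal successive extensions, since the $\leq$-relation on towers only demands universality when the extension is proper; condition (\ref{universal over condition}) is precisely the hypothesis that enforces properness cofinally at each position via its diagonal universality, and this is what allows the direct limit to inherit both limit-modelhood and the non-splitting witness from the finite truncations $\T^i$.
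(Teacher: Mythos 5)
Your construction differs from the paper's in a structurally meaningful way: you set $M^\alpha_j$ to be the direct limit $\bigcup_{i>j} f_{i,\alpha}(M^i_j)$ over \emph{all} later approximations, whereas the paper sets $M^\alpha_i := f_{i+1,\alpha}(M^{i+1}_i)$, the image of a \emph{single} model. With the paper's choice, the non-splitting of $\tp(a_i/M^\alpha_i)$ over $N_i$ is immediate from invariance and monotonicity (via $\T^{i+2}$) and no appeal to Assumption \ref{ss assm}.\ref{split assm}.\ref{locality} is needed; the role of condition \ref{universal over condition} is precisely to guarantee $\bigcup_{j<i}M^\alpha_j\prec_{\K}M^\alpha_i$ at limit indices $i$, since the diagonal union $\bigcup_{j<i}f_{j+1,\alpha}(M^{j+1}_j)$ does not a priori embed into $f_{i+1,\alpha}(M^{i+1}_i)$. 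Your union construction makes that nesting automatic from coherence, but shifts the burden onto the non-splitting verification, and that is where the argument as written has gaps.

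Two concrete problems. First, when you apply Assumption \ref{ss assm}.\ref{split assm}.\ref{locality} to the chain $\langle f_{i,\alpha}(M^i_j):j<i<\alpha\rangle$, that assumption concludes non-splitting over the \emph{first model of the chain}, which is $f_{j+1,\alpha}(M^{j+1}_j)$, not over $N_j$. Since a tower does not require $N_j$ to be a limit model, you cannot simply prepend $N_j$ to the chain to fix the base, and some further uniqueness-of-non-splitting-extensions or weak-transitivity argument would be needed to transfer the base down to $N_j$; the paper's single-image definition sidesteps this entirely. Second, you read condition \ref{universal over condition} as saying $M^{j+1}_j$ is \emph{universal over} $f_{j,j+1}(\bigcup_{l<j}M^j_l)$, but the hypothesis only asserts $\prec_{\K}$ (the stronger universality appears in the construction of Fact \ref{thm:extension for towers}, not in the hypotheses of the present fact); and in any case that condition concerns the top-indexed model of $\T^{i+1}$, so it does not deliver cofinal properness of the column $\langle M^i_j : i>j\rangle$ at a fixed $j$, which is what you invoke it for. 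The universality of $M^\alpha_j$ over $M_j$ should instead come, as in the paper, from the universality of $M^{j+1}_j$ over $M_j$ (from $\T\restriction(j+1)<\T^{j+1}$) together with $f_{j+1,\alpha}$ fixing $M_j$ pointwise.
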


\begin{proof}
To take a direct limit of towers, we first consider the continuous directed system of models $\langle M^*_i\mid i<\alpha\rangle$ and $\langle f_{i,j}\mid i\leq j<\alpha\rangle$, where we define $M^*_i:=M^{i+1}_i$ when $i$ is a successor and $M^*_i:=\Union_{k<i}M^{k+1}_k$ when $i$ is a limit ordinal.  The reasons for this nuanced definition are that the sequences $\bar M^i$ may not be continuous and that $\T^i\in\K^*_{\mu,i}$ is a tower whose maximally indexed model has index $<i$.
By the amalgamation property and condition \ref{identity condition}, we can choose the direct limit $\Union_{i<\alpha}f_{i,\alpha}(M^*_i)$ to be an extension of $\Union_{i<\alpha}M_i$ with $f_{i,\alpha}\restriction M_i=\id_{M_i}$.  

To prepare for the construction of a tower from this direct limit, notice that $f_{i+1,\alpha}(a_i)=a_i$ and $f_{i,\alpha}(N_i)=N_i$.  Furthermore $\tp(a_i/f_{i+1,\alpha}(M^{i+1}_i))$ does not $\mu$-split over $N_i$ by invariance and our assumption that $\T^{i+1}$ is a tower.
Notice that the sequence $\langle f_{i,\alpha}(M^{i+1}_i)\mid i<\alpha\rangle$ is increasing.  

We can now use this direct limit to construct a tower, $\T^\alpha$.  For $i<\alpha$, set $M^\alpha_i:=f_{i,\alpha}(M^{i+1}_i)$.   This definition is called for because for limit ordinals $i$, $\Union_{j<i}M^{j+1}_j$ may not be universal over $M_i$.  However, this causes no issues because $\Union_{j<i}f_{j,\alpha}(M^{j+1}_j)\prec_{\K}f_{i+1,\alpha}(M^{i+1}_i)$ by condition \ref{universal over condition} and the requirement of directed systems that for $j+1<k<i$, $f_{k,i}(f_{j+1,k}(M^{j+1}_j))=f_{j+1,i}(M^{j+1}_j)$.
 Since all the models $M^\alpha_i$ are limit and the non-splitting condition is met, $\T^\alpha:=(\bar M^\alpha,\bar a,\bar N)$ is a (possibly discontinuous) tower.  See Figure \ref{fig:limit construction}.

\begin{figure}[h]
\begin{tikzpicture}[scale=2.35,inner sep=.5mm]
\tikzstyle{rrect}=[rounded corners=5mm]
\draw (0,1) [rrect] rectangle (3.5,0);
\draw (0,1)[rrect] rectangle (1.1,-1.5);
\draw (0,1)[rrect] rectangle (1.1,0);
\draw (0,1)[rrect] rectangle (2.1,-1.5);
\draw (0,1)[rrect] rectangle (2.1,0);
\draw (0,1)[rrect] rectangle (2.55,0);
\draw (1.1,0) [dotted] to (1.25,-.75);
\draw (2.1,0)[dotted] to (2.25,-.75);
\draw (2.55,.2) [dotted]to (2.7,-.75);
\draw (.85,.75) node {$M_{0}$};
\draw (1.75,.75) node {$\dots  M_{i}$};
\draw (2.35,.75) node {$M_{i+1}$};
\draw (3,.65) node {$\dots \displaystyle{\Union_{k<\alpha}M_k}$};
\draw (4.45,-.5) node {$\T^\alpha$};
\draw (2.35,-.5) node {$M^\alpha_i$};
\draw (1.35, -.5) node {$M^\alpha_0$};
\draw (3, -.75) node {$\equalto{M^\alpha_{i+1}}{f_{i+2,\alpha}(M^{i+2}_{i+1})}$};
\draw (-.25,.15) node {$\T$};
\draw [<-, shorten >=3pt] (2.3,-.75) to [bend left=65] node[pos=0.5,right] {$f_{i+1,\alpha}$}(2,-1.5);
\draw (.85,-1.35) node {$M^{i+1}_{0}$};
\draw (1.75,-1.35) node {$\dots M^{i+1}_{i}$};
\draw (-.25,-1.25) node {$\T^{i+1}$};
\node at (2.2,.25)[circle, fill, draw, label=315:$ a_{i}$] {};
\node at (1.2,.25)[circle, fill, draw, label=315:$a_{0}$] {};
\begin{scope}
  \clip (0,1) [rrect] rectangle (5,-1);
\draw (-.2,1) [rrect, xslant=-0.4, dotted] rectangle (4, -.75); 
\end{scope}

\end{tikzpicture}
\caption{The dotted figure depicts the relationship of the direct limit, $\T^\alpha$, to a directed system of towers at stage $i$.} \label{fig:limit construction}
\end{figure}
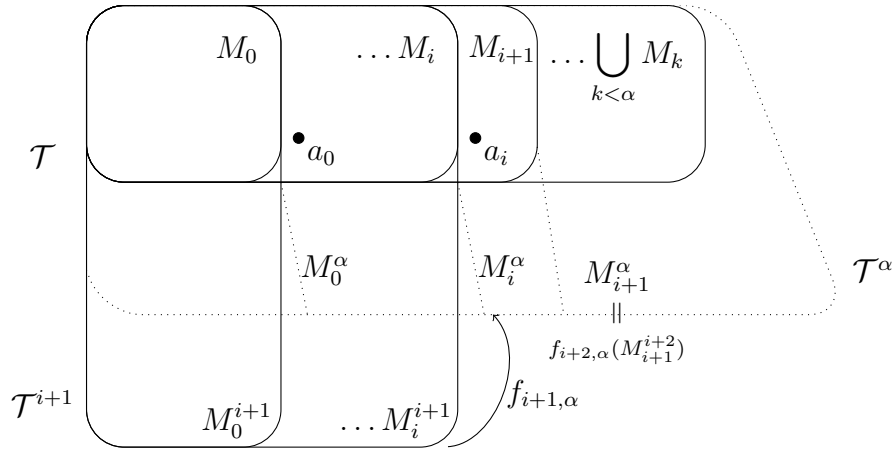

To verify that $\T^\alpha >\T$ we need only check that $f_{i+1,\alpha}(M^{i+1}_i)$ is universal over $M_i$.   This follows from the assumption that $M^{i+1}_i$ is universal over $M_i$ and from 
our selection of the direct limit so that
$f_{i+1,\alpha}(M_i)=M_i$. 
\end{proof}

There are a couple of things to note concerning Fact \ref{limit stage prop}.  First, $\T^\alpha$ need not be continuous, since the towers $\T^i$ from which $\T^\alpha$ is formed may not be continuous.
Also, we do not require that at the top of the tower, the model $\Union_{i<\alpha}M^\alpha_i$,  to be universal over $\Union_{i<\alpha}M_i$.  This is because the definition of extension for towers of length $\alpha$ only requires the universality to hold for models indexed by $i<\alpha$ along the tower.

The first application of Fact \ref{direct limit prop} and Fact \ref{limit stage prop} is the extension property for towers which appears with proof in \cite{GVV}.  Assumption \ref{ss assm} is all that is called upon in the proof given in \cite{GVV}.  We provide the detailed proof here for completeness.

\begin{fact}[Lemma 5.3 of \cite{GVV}]\label{thm:extension for towers}
Given $\T\in\K^*_{\mu,\alpha}$ there exists $\T'\in\K^*_{\mu,\alpha}$ so that $\T<\T'$.

\end{fact}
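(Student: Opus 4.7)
The plan is to produce the extending tower $\T'$ by transfinite recursion on the index, building a coherent directed system $\langle \T^i \in \K^*_{\mu,i} \mid i \leq \alpha\rangle$ of partial extensions of $\T$, equipped with coherence maps $\langle f_{i,j} \mid i \leq j \leq \alpha\rangle$ satisfying $f_{i,j}\restriction M_i = \id_{M_i}$ and the universality condition of Fact \ref{limit stage prop}, and then setting $\T' := \T^\alpha$. The two preceding facts do almost all of the work: Fact \ref{direct limit prop} carries us through successor steps, and Fact \ref{limit stage prop} through limit stages.

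For the successor step, suppose $\T^\beta$ and $\langle f_{i,\beta} \mid i \leq \beta\rangle$ have already been constructed. By $\mu$-stability and amalgamation, choose a limit model $M^* \in \K_\mu$ extending both $\bigcup_{l<\beta} M^\beta_l$ and $M_\beta$ and universal over $M_\beta$. Since $\T$ is a tower, $\tp(a_\beta/M_\beta)$ does not $\mu$-split over $N_\beta$; apply Fact \ref{splitting extension lemma} to obtain $g \in \Aut_{M_\beta}(\C)$ with $\tp(g(a_\beta)/M^*)$ not $\mu$-splitting over $N_\beta$ and $\tp(g(a_\beta)/M_\beta) = \tp(a_\beta/M_\beta)$. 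Let $h := g^{-1}$. Since $h$ fixes $M_\beta$ (and thus $N_\beta \subseteq M_\beta$), the model $h(M^*)$ is a limit model universal over $M_\beta$, and by invariance $\tp(a_\beta/h(M^*))$ does not $\mu$-split over $N_\beta$. Moreover, $h(\bigcup_{l<\beta}M^\beta_l) \prec_{\K} h(M^*)$. Fact \ref{direct limit prop}, applied with this $f := h$ and $M''_\beta := h(M^*)$, then yields the tower $\T^{\beta+1}$ obtained by concatenating $h(\T^\beta)$ with $(h(M^*), a_\beta, N_\beta)$, and $\T^{\beta+1}$ extends $\T\restriction (\beta+1)$. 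Set $f_{\beta,\beta+1} := h$ and $f_{i,\beta+1} := h \circ f_{i,\beta}$ for $i < \beta$; each fixes $M_i$ because $h$ fixes $M_\beta \supseteq M_i$.

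For the limit step at $\beta \leq \alpha$, the already constructed system $\langle \T^i, f_{i,j} \mid i \leq j < \beta\rangle$ fulfills the hypotheses of Fact \ref{limit stage prop}: conditions (2) and (3) there are exactly what the successor construction produces. Applying Fact \ref{limit stage prop} supplies the direct-limit tower $\T^\beta$ together with coherence maps $f_{i,\beta}$ so that $\T\restriction \beta < \T^\beta$ and $f_{i,\beta}\restriction M_i = \id_{M_i}$. The base case $i=0$ is vacuous (empty tower), and at $i=1$ one simply chooses a limit model $M^1_0$ universal over $M_0$. Finally take $\T' := \T^\alpha$.

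The only genuine delicacy is ensuring that, after twisting the whole partial tower by $h$ in the successor step, the universality condition $M^{\beta+1}_i$ over $M_i$ continues to hold for every $i < \beta$. This works out because $h$ fixes $M_\beta$ pointwise and therefore fixes every $M_i \subseteq M_\beta$, so the universality of $M^\beta_i$ over $M_i$ transports to $h(M^\beta_i)$ over $M_i$ by invariance. The coherence hypothesis required by Fact \ref{limit stage prop} also survives the composition $f_{i,\beta+1} = h \circ f_{i,\beta}$, since $h$ fixes $M_\beta$. Everything else is bookkeeping against the definitions.
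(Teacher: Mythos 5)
Your proof is correct and follows essentially the same route as the paper's: build a directed system of partial extensions $\langle \T^i, f_{i,j}\rangle$, using Fact~\ref{splitting extension lemma} plus Fact~\ref{direct limit prop} at successor stages and Fact~\ref{limit stage prop} at limits, then take $\T' := \T^\alpha$. The main presentational difference is that you handle the successor step uniformly (working over $\bigcup_{l<\beta}M^\beta_l$ whether $\beta$ is a successor or a limit, and always invoking the splitting extension lemma to get the automorphism), whereas the paper splits the successor stage into two subcases (``$j$ is a successor'' and ``$j$ is a limit'') and in the latter subcase sets $f_{j,j+1}=\id$ without invoking the splitting lemma, deferring the non-splitting requirement for $a_j$ to the next stage. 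Your uniform treatment is slightly tighter: by applying Fact~\ref{splitting extension lemma} at every successor step you arrange $\tp(a_\beta/M^{\beta+1}_\beta)$ not $\mu$-splitting over $N_\beta$ immediately, which is exactly the hypothesis Fact~\ref{direct limit prop} asks for, and this sidesteps any bookkeeping about which stage ``handles'' the non-splitting of a given $a_j$. You are also more explicit than the paper about inverting the automorphism $g$ produced by Fact~\ref{splitting extension lemma} to obtain the map $h=g^{-1}$ that is actually used as $f_{\beta,\beta+1}$; the paper elides this step. Both are small improvements in clarity rather than genuinely different ideas.
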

\begin{proof}
The construction of $\T'$ extending $\T$ is by way of a directed system of partial extensions defined inductively.  For every $i<\alpha$, we define $\T^i\in\K^*_{\mu,i}$ an extension of $\T\restriction i$ and a system of mappings $\langle f_{i,j}\mid i\leq j<\alpha\rangle$ so that $f_{i,j}\restriction M_i=\id_{M_i}$, $f_{i,i}=\id$, and $M^{i+1}_i$ is universal over $f_{i,i+1}(\Union_{j<i}M^i_j)$.

For the base cases, $\T^0$ is the empty tower and $\T^1$ is a trivial tower, $(M^0_0, \langle\rangle,\langle\rangle)$, where $M^0_0$ is just a limit model universal over $M_0$ and $f_{0,0}=\id$.

Fact \ref{limit stage prop} describes how to carry out the limit stage of the construction.

We consider two cases when building the towers and mappings at the successor stage.
First suppose that $i=j+1$ where $j$ is a successor ($j=k+1$) and that $\T^j$ and $\langle f_{k,j}\mid k\leq j\rangle $ have been defined.  
Fix $M^*_j$ a limit model universal over both $M^j_k$ and $M_j$.
By Fact  \ref{splitting extension lemma}, there exists $f_{j,j+1}\in\Aut_{M_j}(\C)$ so that $\tp(a_j/f_{j,j+1}(M^*_j))$ does not $\mu$-split over $N_j$.  Set $M^i_j:=f_{j,j+1}(M^*_j)$ and for $l<j$, set $M^i_l:=f_{j,j+1}(M^j_l)$.  Fact \ref{direct limit prop} tells us that the tower induced by the sequence $\bar M^{j+1}$ is an extension of $\T\restriction (j+1)$ as required.  We finish the construction of this stage by letting $f_{j+1,l}:=f_{j,j+1}\circ f_{l,j}$ for all $l<j$.

The construction when $i$ is 
the successor of a limit ordinal $j$ is slightly different (although it is actually simpler than the other two cases).  We provide the details.  
Suppose $j$ is a limit ordinal and $\T^j$ and $\langle f_{l,k}\mid l\leq k< j\rangle$ have been defined satisfying the conditions of the construction.  First, set $M^{j+1}_j$ to be a limit model of cardinality $\mu$ which  is universal over both $\Union_{l<j}M^j_l$ and  $M_j$.  Also for $l<j$, let $M^{j+1}_l:=M^j_l$.  Then setting $f_{l,j+1}:=f_{l,j}$ for $l<j$ and $f_{j,j+1}:=\id$ finishes the definition of the $j+1$ stage of the construction.  Notice that the definition of extension property for a tower in $\K^*_{\mu,j+1}$ requires only that the model indexed by $j$ is universal over $M_j$ and that there are no requirements about non-splitting over $M^{j+1}_j$ (non-splitting with regard to $a_j$ is handled in the $j+2$ stage  via the construction in the previous paragraph).

This completes the construction, and an application of Fact \ref{limit stage prop} produces the required $\T'$.

\end{proof}

Note that the extension found in Fact \ref{thm:extension for towers} is not continuous.

The standard strategy of proving the uniqueness of limit models involves continuous towers. 
To produce continuous towers, one may consider reduced towers and verify that, under certain hypotheses, they are both continuous  (e.g. Theorem 5.8 from \cite{GVV})  and dense in $(\K^*_{\mu,\alpha},<)$ (e.g. Theorem III.11.5 from \cite{Va1}).

\begin{definition}\label{reduced defn}\index{reduced towers}
A tower $(\bar M,\bar a,\bar N)\in\K^*_{\mu,\alpha}$ is said to 
be \emph{reduced} provided that for every $(\bar M',\bar a,\bar
N)\in\K^*_{\mu,\alpha}$ with
$(\bar M,\bar a,\bar N)\leq(\bar M',\bar a,\bar
N)$ we have that for every
$i<\alpha$,
$$(*)_i\quad M'_i\cap\Union_{j<\alpha}M_j = M_i.$$
\end{definition}

There are a few facts about reduced towers that are known to hold under Assumption \ref{ss assm}.  
The first is the statement that a union of an increasing chain of reduced towers is reduced.  

\begin{fact}\label{union of reduced is reduced}
If $\langle \T^\gamma\mid \gamma<\beta\rangle$ is a $<$-increasing chain of reduced, continuous towers, then $\Union_{\gamma<\beta}\T^\gamma$ is also reduced and continuous.
\end{fact}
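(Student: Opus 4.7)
The plan is to define the pointwise union tower $\T = (\bar M, \bar a, \bar N)$ by $M_i := \bigcup_{\gamma < \beta} M^\gamma_i$ for each $i < \alpha$, and then verify in turn that $\T \in \K^*_{\mu,\alpha}$, that $\T$ is continuous, and that $\T$ is reduced. The sequences $\bar a$ and $\bar N$ are inherited from the chain, since the extension relation $\leq$ between towers keeps these fixed. Implicit in the hypothesis is that the chain length $\beta$ lies below $\mu^+$, so each $M_i$ still has cardinality $\mu$.

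To confirm $\T$ is a tower, each clause of the definition must be checked. Each $M_i$ is a directed union of limit models of size $\mu$, and the successor steps $M^\gamma_i \preceq_\K M^{\gamma+1}_i$ are universal by definition of proper extension, so $M_i$ is itself a limit model. Universality of $M_i$ over $N_i$ is inherited from $M^0_i$. For the non-$\mu$-splitting clause I would apply Assumption \ref{ss assm}.\ref{split assm}.\ref{locality} to the chain $\langle M^\gamma_i \mid \gamma < \beta\rangle$: each $\tp(a_i/M^\gamma_i)$ does not $\mu$-split over $N_i$, hence not over $M^0_i$ by upward monotonicity in the base, so locality gives that $\tp(a_i/M_i)$ does not $\mu$-split over $M^0_i$. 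The standard transitivity/uniqueness of non-splitting extensions (using that $M^0_i$ is universal over $N_i$ and that $\tp(a_i/M^0_i)$ does not $\mu$-split over $N_i$) then upgrades this to non-$\mu$-splitting over $N_i$.

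Continuity of $\T$ at a limit ordinal $i < \alpha$ is a routine interchange of unions, using continuity of each $\T^\gamma$:
\[
M_i \;=\; \bigcup_{\gamma<\beta} M^\gamma_i \;=\; \bigcup_{\gamma<\beta}\bigcup_{j<i} M^\gamma_j \;=\; \bigcup_{j<i}\bigcup_{\gamma<\beta} M^\gamma_j \;=\; \bigcup_{j<i} M_j.
\]
For reducedness, fix any $(\bar M', \bar a, \bar N) \geq \T$. Since $\T^\gamma \leq \T$ for every $\gamma$, transitivity of $\leq$ gives $\T^\gamma \leq (\bar M', \bar a, \bar N)$, so reducedness of $\T^\gamma$ yields $M'_i \cap \bigcup_{j<\alpha} M^\gamma_j = M^\gamma_i$ for every $i < \alpha$. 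Taking unions over $\gamma < \beta$ and using that intersection distributes over directed unions,
\[
M'_i \cap \bigcup_{j<\alpha} M_j \;=\; \bigcup_{\gamma<\beta}\bigl(M'_i \cap \bigcup_{j<\alpha} M^\gamma_j\bigr) \;=\; \bigcup_{\gamma<\beta} M^\gamma_i \;=\; M_i,
\]
as required.

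The only subtle point is verifying non-$\mu$-splitting over $N_i$ rather than over $M^0_i$ in the first step, since Assumption \ref{ss assm}.\ref{split assm}.\ref{locality} directly supplies only non-splitting over the initial model of the chain. I expect this to be dispatched by combining locality with the standard monotonicity and transitivity properties of non-splitting, exploiting that $M^0_i$ is universal over $N_i$; the remaining computations are purely set-theoretic manipulations of unions and intersections.
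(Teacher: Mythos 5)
Your proof is correct and follows essentially the same three-part structure as the paper's own sketch: verify that the pointwise union is a tower, check continuity, and deduce reducedness (the paper phrases the last step as a contradiction argument, which is just the contrapositive of your direct calculation). Your handling of the non-splitting clause --- applying locality with base $M^0_i$ and then upgrading to $N_i$ via weak transitivity and uniqueness of non-splitting extensions, using that $M^0_i$ is universal over $N_i$ --- is exactly the right way to flesh out the paper's terse "Assumption \ref{ss assm} gives the required non-$\mu$-splitting"; the paper's own remark after Assumption \ref{ss assm} points to precisely this weak-transitivity property of Vasey as the relevant tool.
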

At the request of the referee, we provide an outline of the proof.
First we must establish that the union of towers $(\T:=\Union_{\gamma<\beta}\T^\gamma)$ is a tower. 
To get that for $j<\alpha$, the models $M_j:=\Union_{\gamma<\beta}M^\gamma_j$ in $\T$ are limit models, we use the
definition of extension for towers which establishes that  the sequence $\langle M^\gamma_j\mid \gamma<\beta\rangle$ 
witnesses that $M_j$ is a limit model.
Assumption \ref{ss assm} gives the required  non-$\mu$-splitting of $\tp(a_j/M_j)$ over $N_j$.  Continuity of $\T$ follows by our assumption that all the towers $\T^\gamma$ were continuous.
The proof that $\T$ is also reduced is  by contradiction.  
Basically, the witness that $\T$ is not reduced would also witness that one of the $\T^\gamma$ is not reduced.

The following is  a special case of  Theorem 3.1.13 of \cite{ShVi}.  It appears in this form with complete proof as Theorem 5.6 of \cite{GVV}.   The proof of Theorem 5.6 only requires  
Fact \ref{thm:extension for towers} and
that the union of a $<$-increasing chain of towers is a tower.  The proof is by contradiction.  Using Fact \ref{thm:extension for towers}, build a long ($\mu^+$) chain of towers in $\K^*_{\mu,\alpha}$ 
with $\T^{i+1}$ witnessing that $\T^i$ is not
 reduced. The proof involves a straightforward counting argument involving the indices $\gamma<\alpha$ where the tower $\T^i$ fails to be reduced.

\begin{fact}[Density of reduced towers]\label{density of reduced}
For every  $\T\in\K^*_{\mu,\alpha}$ there exists $\T'\in\K^*_{\mu,\alpha}$
which is a 
$<$-extension of $\T$ and is reduced.  
\end{fact}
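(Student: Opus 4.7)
The plan is to argue by contradiction. Suppose $\T$ admits no reduced $<$-extension in $\K^*_{\mu,\alpha}$. Because a reduced $<$-extension of a $<$-extension of $\T$ would, by transitivity of $<$, itself be a reduced $<$-extension of $\T$, this failure is inherited by every $<$-extension constructed from $\T$. The overall strategy is to produce a $<$-increasing chain $\langle \T^i \mid i<\mu^+\rangle$ in $\K^*_{\mu,\alpha}$ with $\T^0=\T$ in which each $\T^{i+1}$ exhibits the non-reducedness of $\T^i$, and then pigeonhole on the fewer-than-$\mu^+$ many places where reducedness can fail to extract a cardinality contradiction.

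For the construction, proceed by induction on $i<\mu^+$. At successors, since $\T^i$ is not reduced, there exist an index $\gamma_i<\alpha$ and a $\leq$-extension $\bar \T^i$ of $\T^i$ with $\bar M^i_{\gamma_i}\cap \Union_{j<\alpha}M^i_j\supsetneq M^i_{\gamma_i}$. Fact~\ref{thm:extension for towers} then supplies a strict $<$-extension $\T^{i+1}$ of $\T^i$ that also majorizes $\bar \T^i$, so the witness of non-reducedness is preserved in $\T^{i+1}$. At limit stages $i<\mu^+$, set $\T^i:=\Union_{k<i}\T^k$; since $|i|\le\mu$, each coordinate union remains a limit model of size $\mu$ by concatenating the witnessing chains, and the non-splitting requirements at the new limit indices are controlled by Assumption~\ref{ss assm}, so $\T^i\in\K^*_{\mu,\alpha}$.

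With the chain in hand, because $|\alpha|\le\mu$, pigeonhole yields a fixed $\gamma^*<\alpha$ and a cofinal $S\subseteq\mu^+$ with $\gamma_i=\gamma^*$ for all $i\in S$. For each $i\in S$ choose a witness $x_i\in M^{i+1}_{\gamma^*}\cap \Union_{j<\alpha}M^i_j$ with $x_i\notin M^i_{\gamma^*}$. Strictness of the chain forces $x_i\in M^{i+1}_{\gamma^*}\preceq_{\K} M^{i'}_{\gamma^*}\not\ni x_{i'}$ whenever $i<i'$ in $S$, so the $x_i$ are pairwise distinct, yielding $\mu^+$ distinct elements.

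The main obstacle---and the step I expect to require the most care---is to concentrate these $\mu^+$ distinct witnesses inside a single object of cardinality $\mu$. A second pigeonhole on the index $j_i<\alpha$ with $x_i\in M^i_{j_i}$ gives a cofinal $S'\subseteq S$ and a fixed $j^*>\gamma^*$ with $x_i\in M^i_{j^*}$ for all $i\in S'$. Tracking how the $x_i$ propagate along the chain into $M^i_{j^*}$, together with the fact that each $M^i_{j^*}$ has cardinality only $\mu$, is what ultimately forces too many distinct elements into a model of the wrong size, producing the contradiction. The directed-system machinery of Facts~\ref{direct limit prop}--\ref{thm:extension for towers} carries out the routine parts of the construction; the delicate part is precisely this final cardinality accounting against the indices where reducedness can fail.
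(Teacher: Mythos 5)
Your setup matches the paper's proof of Fact~\ref{density of reduced} (which is Theorem~5.6 of \cite{GVV}, going back to \cite{ShVi}): assume toward a contradiction that $\T$ has no reduced $<$-extension, build a continuous $<$-increasing chain $\langle \T^i \mid i<\mu^+\rangle$ using Fact~\ref{thm:extension for towers} at successors and unions at limits, record at each successor step an index $\gamma_i<\alpha$ and a witness $x_i\in M^{i+1}_{\gamma_i}\cap\Union_{j<\alpha}M^i_j$ with $x_i\notin M^i_{\gamma_i}$, and pigeonhole on the $\gamma_i$. Everything up to and including the first pigeonhole and the observation that the $x_i$ are pairwise distinct is correct.

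The gap is exactly the step you flag as delicate, and your second pigeonhole does not close it. Fixing a column index $j^*$ with $x_i\in M^i_{j^*}$ for $i\in S'$ concentrates nothing: the models $M^i_{j^*}$ vary with $i$, their union has cardinality $\mu^+$, and $\mu^+$ pairwise distinct elements distributed one per model across $\mu^+$ many $\mu$-sized models is perfectly consistent. What is needed is to press down the \emph{chain} index, not the column index. Since the chain is continuous, at a limit $i$ one has $\Union_{j<\alpha}M^i_j=\Union_{k<i}\Union_{j<\alpha}M^k_j$, so $x_i$ already lies in $\Union_{j<\alpha}M^k_j$ for some $k<i$; the map $i\mapsto$ (least such $k$) is regressive on limit ordinals. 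Fodor's lemma then forces $\mu^+$ of the pairwise distinct $x_i$ into a single $\Union_{j<\alpha}M^{k^*}_j$, a set of cardinality $\mu$, which is the contradiction. For this you also need $S$ stationary rather than merely cofinal, which is automatic since $\mu^+$ is being partitioned into $\leq|\alpha|\leq\mu$ pieces according to $\gamma_i$. Without this pressing-down argument the proposal stops short of a proof; the ``tracking how the $x_i$ propagate'' sentence restates the desired conclusion but does not supply the mechanism.
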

The reduced extension found in Fact \ref{density of reduced} is not immediately known to be continuous.

Finally we will need the following monotonicity property of reduced towers.  Under slightly different hypotheses this appears as Lemma III.11.5 in \cite{Va1}.
\begin{proposition}\label{monotonicity}
Suppose that $(\bar M,\bar a,\bar N)\in\K^*_{\mu,\alpha}$ is
reduced. Then $(\bar M,\bar
a,\bar N)\restriction \beta$ is reduced for every $\beta<\alpha$.
\end{proposition}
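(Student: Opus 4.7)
The plan is proof by contradiction: I will assume that $(\bar M, \bar a, \bar N) \restriction \beta$ is not reduced and derive that $(\bar M, \bar a, \bar N)$ itself is not reduced, contradicting the hypothesis. Unpacking Definition \ref{reduced defn}, non-reducedness of the initial segment yields a tower $\T' = (\bar M', \bar a \restriction \beta, \bar N \restriction \beta) \in \K^*_{\mu,\beta}$ with $(\bar M, \bar a, \bar N) \restriction \beta \leq \T'$ and an index $i_0 < \beta$ for which $M'_{i_0} \cap \bigcup_{j<\beta} M_j$ strictly contains $M_{i_0}$. I fix an element $b$ witnessing this strict containment.

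My goal is to extend $\T'$ to a tower $\T'' = (\bar M'', \bar a, \bar N) \in \K^*_{\mu,\alpha}$ satisfying $(\bar M, \bar a, \bar N) \leq \T''$ and $M'_i \preceq_{\K} M''_i$ for every $i < \beta$. Once such a $\T''$ is in hand, $b \in M'_{i_0} \subseteq M''_{i_0}$ and $b \in \bigcup_{j<\beta}M_j \subseteq \bigcup_{j<\alpha}M_j$, yet $b \notin M_{i_0}$; so $(\T'', i_0)$ witnesses non-reducedness of $(\bar M, \bar a, \bar N)$, producing the desired contradiction.

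To construct $\T''$, I will set $M''_j := M'_j$ for $j < \beta$ and build the tail $\langle M''_j \mid \beta \leq j < \alpha \rangle$ by transfinite recursion, running the construction of Fact \ref{thm:extension for towers} with $\T'$ as initial data rather than the empty tower. At the splicing stage $\beta$, I use the amalgamation property, after first moving $\T'$ by an automorphism fixing $\bigcup_{i<\beta}M_i$ if needed (this does not disturb the witness $b$, as $b \in \bigcup_{j<\beta}M_j$ is fixed), to form a limit model $M''_\beta$ of cardinality $\mu$ that extends both $\bigcup_{i<\beta} M'_i$ and $M_\beta$ and is universal over $M_\beta$. At subsequent successor stages, I apply Fact \ref{splitting extension lemma} (as in the successor step of Fact \ref{thm:extension for towers}) to find an automorphism fixing $M_j$ that relocates the newly built extension so that $\tp(a_j / M''_{j+1})$ does not $\mu$-split over $N_j$. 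At limit stages inside $[\beta,\alpha)$, I take direct limits via Fact \ref{limit stage prop}. Because every step is carried out over $M_j$ (or the appropriate initial union), the resulting $\T''$ satisfies $M_j \preceq_{\K} M''_j$ for all $j<\alpha$.

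The main obstacle is the splicing at stage $\beta$: the amalgamation must merge $\bigcup_{i<\beta} M'_i$ with $M_\beta$ over $\bigcup_{i<\beta}M_i$ into a limit model of cardinality $\mu$ that is simultaneously universal over $M_\beta$, so that both the tower axioms and the $\leq$-extension condition of Definition 2 hold at this seam. Once this seam is handled, the rest of the construction adapts Fact \ref{thm:extension for towers} with only cosmetic changes and introduces no genuinely new difficulty.
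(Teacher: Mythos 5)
Your plan follows the paper's proof almost exactly: run the directed-system construction of Fact~\ref{thm:extension for towers} with $\T'$ as seed and then exhibit $b$ together with the index $i_0$ as a witness that $\T$ itself is not reduced. There is, however, one step that is stated too strongly and would not go through as written. You set as an intermediate goal that the resulting $\T''$ satisfy $M'_i\preceq_{\K} M''_i$ for every $i<\beta$, and your final contradiction uses the chain $b\in M'_{i_0}\subseteq M''_{i_0}$. But once you invoke the machinery of Fact~\ref{thm:extension for towers} (as you say you will), the successor steps beyond $\beta$ apply automorphisms $f_{j,j+1}\in\Aut_{M_j}(\C)$ which fix $M_j$ but in general move the earlier models of the partial tower, including the $M'_l$ for $l<\beta$; the final model at index $i_0$ is the automorphic image $f_{i_0+1,\alpha}(M'_{i_0})$, not an end extension of $M'_{i_0}$. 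Indeed one cannot hope to keep the $M'_i$ literally fixed: for $\zeta\geq\beta$ the type $\tp(a_\zeta/\Union_{i<\beta}M'_i)$ may already $\mu$-split over $N_\zeta$, and no further enlargement repairs that --- only a relocation does, which is exactly what Fact~\ref{splitting extension lemma} supplies.

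The fix is what the paper actually does, and you already have the key observation in your parenthetical about $b$ being fixed. Make it the main argument rather than an aside: since $b\in M_j$ for some $j<\beta$ and every map $f_{j,\zeta}$ in the directed system restricts to the identity on $M_j$, one has $f_{j,\zeta}(b)=b$, and because $f_{i_0+1,j}=\id$ on the initial segment, $b\in f_{j,\zeta}(M'_{i_0})=f_{i_0+1,\zeta}(M'_{i_0})=M^{\zeta}_{i_0}$. Tracking $b$ directly in this way replaces the unachievable inclusion $M'_{i_0}\subseteq M''_{i_0}$ and closes the gap; everything else in your outline (the choice of $b$ and $i_0$, the splicing at $\beta$ via amalgamation, the limit stages via Fact~\ref{limit stage prop}, and the final contradiction with $(*)_{i_0}$) is correct and matches the paper.
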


\begin{proof}
Suppose the claim fails for the reduced tower $\T:=(\bar M,\bar a,\bar N)\in\K^*_{\mu,\alpha}$.  
Let $\beta<\alpha$ be such that $(\bar M,\bar a,\bar N)\restriction\beta$ is not reduced.
Fix $i<j<\beta$,  $b\in M_j\backslash M_i$, and there exists $\T'\in\K^*_{\mu,\beta}$ so that $b\in M'_i$.

We can now build a directed system of partial extensions of $\T$ as in Fact \ref{thm:extension for towers}.  
Only here we start with $\T^\gamma:=\T'\restriction \gamma$ for all $\gamma<\beta$ and
$\T^\beta:=\T'$ with $f_{\gamma,\zeta}=\id_{M'_\gamma}$ when $\gamma\leq \zeta\leq\beta$.  The rest of the construction for $\zeta\in(\beta,\alpha]$ is the same as Fact \ref{thm:extension for towers}.
Because $b\in M_j$, $b$ is fixed throughout the construction. To see this, notice that by the requirements of the construction $f_{j,\zeta}$ fixes $M_j$ so $f_{j,\zeta}(b)=b$.  Then  for $\zeta\in(\beta,\alpha]$, we have
$b\in f_{j,\zeta}(M'_i) = f_{j,\zeta}(f_{i+1,j}(M'_i))=f_{i+1,\zeta}(M'_i)=f_{i+1,\zeta}(M^{i+1}_i)=:M^\zeta_i$.  Thus, $\T^\alpha$, $b$, and $i$ witness that $\T$ is not reduced.

\end{proof}


\section{Transferring structure from $\K_{\mu^+}$ to $\K_{\mu}$}
In this section we provide an argument for Theorem \ref{main theorem}.
First notice that by the proof of Theorem 1.9 in \cite{GVV}, in order to prove Theorem \ref{main theorem} stated in the introduction, it is enough to prove that reduced towers are continuous (Theorem \ref{transfer theorem}).  Therefore we will concentrate on Theorem \ref{transfer theorem} for the remainder of this section.  The proof of Theorem \ref{transfer theorem} is similar to that of Theorem 2 of \cite{Va2}.

\begin{theorem}\label{transfer theorem}

 Let $\K$ be a $\mu$-stable abstract elementary class that  satisfies the locality and continuity properties of $\mu$-splitting.  If, in addition, $\K$ satisfies the  property that the union of any chain of saturated models of cardinality $\mu^+$ is saturated, then any reduced tower made up of models of cardinality $\mu$ is continuous.
\end{theorem}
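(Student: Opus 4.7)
The plan is to argue by contradiction. Assume that a reduced $\T = (\bar M, \bar a, \bar N) \in \K^*_{\mu,\alpha}$ fails continuity at some limit index $i^* < \alpha$, and fix a witness $b \in M_{i^*} \setminus \bigcup_{j<i^*} M_j$. The idea is to transport the whole tower up to cardinality $\mu^+$ along a long chain of reduced extensions; the saturation hypothesis will then force two saturated models of cardinality $\mu^+$ to be isomorphic in a way that, when pulled back, contradicts the reducedness of $\T$ at $\mu$.

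First I would iterate Facts \ref{thm:extension for towers} and \ref{density of reduced} over $\mu^+$ stages to construct a $<$-increasing sequence of reduced towers $\langle \T^\gamma : \gamma < \mu^+ \rangle$ with $\T^0 = \T$, taking unions at limit stages (the reducedness portion of the argument behind Fact \ref{union of reduced is reduced} does not in fact use continuity of the constituents, only Assumption \ref{ss assm}). Setting $M^{**}_j := \bigcup_{\gamma<\mu^+} M^\gamma_j$, each $M^{**}_j$ is a $\mu^+$-union of limit models of cardinality $\mu$ with consecutive terms universally increasing, and hence saturated of cardinality $\mu^+$: by cofinality any cardinality-$\mu$ submodel $N \prec M^{**}_j$ embeds into some $M^{\gamma^*}_j$, so any Galois type over $N$ extends to one over $M^{\gamma^*}_j$ and is realized in $M^{\gamma^*+1}_j$ by universality. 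Consequently $\langle M^{**}_j : j < i^* \rangle$ is a $\prec_\K$-increasing chain of saturated models of cardinality $\mu^+$, and the hypothesis of the theorem yields that $\bigcup_{j<i^*} M^{**}_j$ is also saturated of cardinality $\mu^+$.

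Next I would transfer the failure of continuity up to $\mu^+$: applying reducedness of $\T$ to each $\T^\gamma \geq \T$ gives $M^\gamma_j \cap \bigcup_{k<\alpha} M_k = M_j$ for every $\gamma < \mu^+$ and $j < \alpha$, so $b \in M_{i^*} \subseteq \bigcup_k M_k$ combined with $b \notin M_j$ for $j < i^*$ forces $b \notin M^\gamma_j$ for any such $j, \gamma$. Hence $b \in M^{**}_{i^*} \setminus \bigcup_{j<i^*} M^{**}_j$, a proper inclusion of two saturated models of cardinality $\mu^+$ over the common cardinality-$\mu$ submodel $P := \bigcup_{j<i^*} M_j$. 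Uniqueness of saturated models then supplies a $\K$-isomorphism $g : M^{**}_{i^*} \to \bigcup_{j<i^*} M^{**}_j$ fixing $P$. Pick $j_0 < i^*$ and $\gamma_0 < \mu^+$ with $g(b) \in M^{\gamma_0}_{j_0}$, and select $h \in \Aut_P(\C)$ with $h(g(b)) = b$ (using that $g(b)$ and $b$ share the Galois type over $P$). Since $M_{j_0} \subseteq P$, $h$ fixes $M_{j_0}$, so $h(M^{\gamma_0}_{j_0})$ is a limit model of cardinality $\mu$ universal over $M_{j_0}$ and containing $b$. Combining this choice with the tower machinery of Facts \ref{direct limit prop}, \ref{limit stage prop}, and \ref{thm:extension for towers}, I would build a proper extension $\T' \in \K^*_{\mu,\alpha}$ of $\T$ with $M'_{j_0} \supseteq h(M^{\gamma_0}_{j_0})$; then $b \in M'_{j_0} \cap \bigcup_k M_k$ while $b \notin M_{j_0}$, contradicting reducedness of $\T$.

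The main obstacle, I expect, is this last construction: one must extend the prescribed $M'_{j_0}$ into a full proper tower extension $\T'$ of $\T$ of length $\alpha$, preserving both the appropriate universality at each index and the requirement that $\tp(a_j/M'_j)$ does not $\mu$-split over $N_j$ for each $j < \alpha$. This transport rests on the invariance encoded in Fact \ref{splitting extension lemma} together with the locality and ``no long splitting chain'' properties from Assumption \ref{ss assm}.
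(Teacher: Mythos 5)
Your overall strategy matches the paper's: build a $\mu^+$-long $<$-increasing chain of reduced extensions of $\T$, observe that the resulting columns $M^{**}_j$ are $\mu^+$-saturated, invoke the hypothesis to get $\bigcup_{j<i^*}M^{**}_j$ saturated, find a realization of $\tp(b/\bigcup_{j<i^*}M_j)$ at a lower index, pull it back by an automorphism fixing $\bigcup_{j<i^*}M_j$, and conclude a contradiction with reducedness. Steps 1--8 are essentially the paper's argument (your use of uniqueness of saturated models plus a second automorphism $h$ is just a slightly heavier phrasing of the paper's single step of realizing the Galois type in $\check M$ and conjugating by one map), and your observation that the reducedness part of Fact~\ref{union of reduced is reduced} does not use continuity of the constituents is correct and is exactly what lets you run the chain at length $\alpha$ rather than at a truncated length.

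The gap is in the last step, and it has two parts. First, you never reduce to a counterexample of minimal length. The paper applies Proposition~\ref{monotonicity} to arrange $\alpha=\delta+1$ with the discontinuity at $\delta$. This is what makes the contradicting tower cheap to build: after pushing forward by $f$ one only needs to append a single $M'_\delta$, and since $\delta+1=\alpha$ there is no designated element $a_\delta$, hence no non-splitting condition to arrange. In your setup $\alpha$ may be much larger than $i^*+1$, so after producing a partial tower up to $i^*$ you would have to run the directed-system machinery (Facts~\ref{direct limit prop}--\ref{thm:extension for towers}) all the way from $i^*$ to $\alpha$ while arranging the non-splitting requirements at each $j\geq i^*$ and simultaneously verifying that $b$ is never displaced by the conjugating maps $f_{j,k}$. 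You flag this as the ``main obstacle'' but leave it unresolved; the minimality reduction is precisely what makes it disappear.

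Second, you apply $h$ only to the single model $M^{\gamma_0}_{j_0}$ and then ask for a proper extension $\T'$ of $\T$ ``with $M'_{j_0}\supseteq h(M^{\gamma_0}_{j_0})$.'' Fact~\ref{thm:extension for towers} does not allow you to prescribe a particular model at a chosen index; it produces \emph{some} extension. The paper avoids this by applying $f$ to the \emph{entire} restricted tower $(\bar M,\bar a,\bar N)^i\restriction\delta$: since $f$ fixes $\bigcup_{\beta<\delta}M_\beta$, hence all $M_\beta,a_\beta,N_\beta$ for $\beta<\delta$, the image $f(\T^i\restriction\delta)$ is automatically a tower $\geq\T\restriction\delta$ by invariance. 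In your setup, $h$ fixes $P=\bigcup_{j<i^*}M_j$, so the correct move is to take $h(\T^{\gamma_0}\restriction i^*)$ as the partial tower; it extends $\T\restriction i^*$ and contains $b$ at index $j_0$. With that change and the minimality reduction, appending one last model at index $i^*$ completes the contradiction and your proof becomes essentially the paper's.
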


\begin{proof}
Suppose the theorem fails.  Let $(\bar M,\bar a,\bar N)\in\K^*_{\mu,\alpha}$ be a counter-example of minimal length, $\alpha$.  Notice that by Proposition \ref{monotonicity}, we can conclude that $\alpha=\delta+1$ for some limit ordinal $\delta$ and that the failure of continuity must occur at $\delta$.
Let $b\in M_\delta\backslash \Union_{i<\delta}M_i$ witness the discontinuity of the tower.  

By the minimality of $\alpha$ and the density of reduced towers (Fact \ref{density of reduced}) we can construct a $<$-increasing and continuous chain of reduced, continuous towers $\langle (\bar M,\bar a,\bar N)^i\in\K^*_{\mu,\delta}\mid i<\mu^+\rangle$ with $(\bar M,\bar a,\bar N)^0:=(\bar M,\bar a,\bar N)\restriction \delta$. Let $\displaystyle{\check M:=\Union_{i<\mu^+,\;\beta<\delta}M^i_\beta}$.  Refer to Figure \ref{fig:Mdeltas}.

\begin{figure}[h]
\begin{tikzpicture}[rounded corners=5mm,scale =2.9,inner sep=.5mm]
\draw (0,1.5) rectangle (.75,.5);
\draw (0,1.5) rectangle (1.75,1);
\draw (.25,.75) node {$N_0$};
\draw (1.25,1.25) node {$N_i$};
\draw (0,0) rectangle (4,1.5);
\draw (0,1.5) rectangle (3.5,-2);
\draw (.85,.25) node {$M_0$};
\draw(1.25,.25) node {$M_1$};
\draw (1.75,.25) node {$\dots M_i$};
\draw (2.35,.25) node {$M_{i+1}$};
\draw (3.15,.2) node {$\dots\displaystyle{\Union_{k<\delta}M_k}$};
\draw (3.85, .25) node {$M_\delta$};
\draw (-.5,.25) node {$(\bar M,\bar a,\bar N)$};
\draw (0,1.5) rectangle (3.5, -.4);
\draw (.85,-.15) node {$M^{1}_0$};
\draw (1.75,-.15) node {$\dots M^{1}_i$};
\draw (2.35,-.15) node {$M^{1}_{i+1}$};
\draw(1.25,-.15) node {$M^{1}_1$};
\draw (3.15,-.2) node {$\dots\displaystyle{\Union_{l<\delta}M^{1}_l}$};
\draw (-.5,-.15) node {$(\bar M,\bar a,\bar N)^1$};
\draw (.85,-.6) node {$\vdots$};
\draw (1.75,-.6) node {$\vdots$};
\draw (2.35,-.6) node {$\vdots$};
\draw (3.2,-.6) node {$\vdots$};
\draw (0,1.5) rectangle (3.5, -1);
\draw (.85,-.85) node {$M^{j}_0$};
\draw (1.75,-.85) node {$\tiny{\dots} M^{j}_i$};
\draw (2.35,-.85) node {$M^{j}_{i+1}$};
\draw (3,-.85) node {$\dots\Union_{l<\delta}M^{j}_l$};
\draw (-.5,-.85) node {$(\bar M,\bar a,\bar N)^j$};
\draw (0,1.5) rectangle (3.5, -1.35);
\draw (0,1.5) rectangle (1,-2);
\draw(0,1.5) rectangle (1.5, -2);
\draw (0,1.5) rectangle (2.5, -2);
\draw (0,1.5) rectangle (2,-2);
\draw (0,1.5) rectangle (3.5, -2);
\draw (.8,-1.15) node {$M^{j+1}_0$};
\draw (1.8,-1.15) node {$ M^{j+1}_i$};
\draw (2.3,-1.15) node {$M^{j+1}_{i+1}$};
\draw (3,-1.2) node {$\dots\Union_{l<\delta}M^{j+1}_l$};
\draw (-.5,-1.15) node {$(\bar M,\bar a,\bar N)^{j+1}$};
\draw (.85,-1.6) node {$\vdots$};
\draw (1.75,-1.6) node {$\vdots$};
\draw (2.35,-1.6) node {$\vdots$};
\draw (3.2,-1.6) node {$\vdots$};
\node at (3.75,.75)[circle, fill, draw, label=90:$b$] {};
\node at (2.25,.75)[circle, fill, draw, label=290:$a_i$] {};
\node at (1.1,.75)[circle, fill, draw, label=290:$a_1$] {};
\draw (3.75,-1.75) node {$\check{M}$};
\end{tikzpicture}
\caption{$(\bar M,\bar a,\bar N)$ and the  towers $(\bar M,\bar a,\bar N)^j$ extending $(\bar M,\bar a,\bar N)\restriction\delta$.} \label{fig:Mdeltas}
\end{figure}
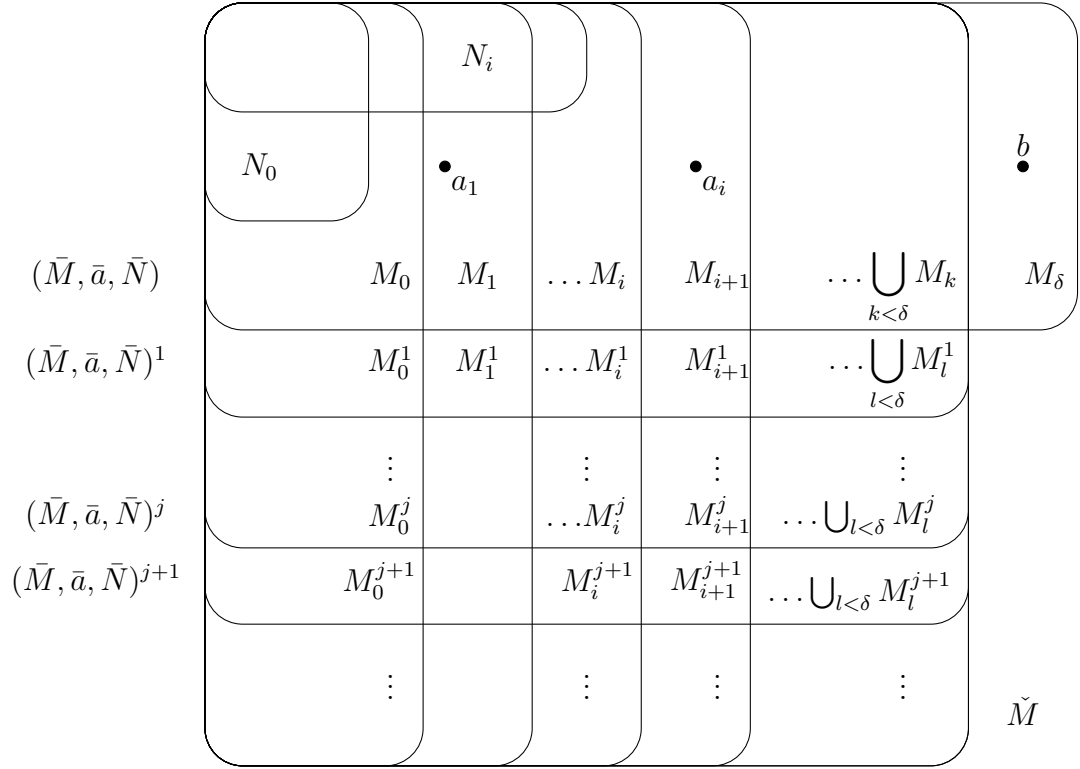

Notice that the model 
$\check M$ is a (Galois-)saturated model of cardinality $\mu^+$.  To see this, we observe that for each $\beta<\delta$, the model $\Union_{i<\mu^+}M^i_\beta$ is saturated because $M^{i+1}_\beta$ is universal over $M^i_\beta$.  Then by our assumption that the union of $\mu^+$-saturated models is saturated, $\check M$ is saturated.

Fix $\check b\in \check M$ so that  $\check b\models
\tp(b/\Union_{\beta<\delta}M_\beta)$.  Fix the minimal $i<\mu^+$ and $l+1<\delta$ so that $\check b\in M^i_{l+1}$.
By the equality of the types of $b$ and $\check b$ over $\Union_{\beta<\delta}M_\beta$, we can 
choose a $\K$-mapping $f$ so that $f(\check b)=b$ and $f\restriction\Union_{\beta<\delta}M_\beta$ is the identity.
Now consider the tower $(\bar M',\bar a,\bar N)\in\K^*_{\mu,\alpha}$ defined by setting $M'_\beta:=f(M^i_\beta)$ for $\beta<\delta$ and choosing $M'_\delta$ to be a limit model which extends $\Union_{\beta<\delta} M'_\beta$ and is universal over $M_\delta$.  See Figure \ref{fig:reduced tower construction}.  Notice that $(\bar M',\bar a,\bar N)$ and $b$ witness that $(\bar M,\bar a,\bar N)$ is not reduced.

\begin{figure}[h]
\begin{tikzpicture}[scale=2.35,inner sep=.5mm]
\tikzstyle{rrect}=[rounded corners=5mm]
\draw (0,0) [rrect] rectangle (4.25,1);
\draw (0,1) [rrect] rectangle (3.5,-1);
\draw (0,1)[rrect] rectangle (1.1,-1);
\draw (0,1)[rrect] rectangle (1.1,0);
\draw (0,1)[rrect] rectangle (2.1,-1);
\draw (0,1)[rrect] rectangle (2.1,0);
\draw (0,1)[rrect] rectangle (2.55,-1);
\draw (0,1)[rrect] rectangle (2.55,0);
\draw (1.1,-.45) [dotted] to (1.25,-.75);
\draw (2.1,-.45)[dotted] to (2.25,-.75);
\draw (2.55,-.45) [dotted]to (2.7,-.75);
\draw (.85,.75) node {$M_{0}$};
\draw (1.75,.75) node {$\dots  M_{l}$};
\draw (2.35,.75) node {$M_{l+1}$};
\draw (3,.75) node {$\dots \Union_{\beta<\delta}M_\beta$};
\draw (4, .75) node {$M_\delta$};
\draw (4,-.5) node {$M'_{l}$};
\draw (4.75,.15) node {$(\bar M',\bar a,\bar N)$};
\node at (3.7,.25)[circle, fill, draw, label=90:$b$] {};
\node at (2.45,-.9)[circle, fill, draw, label=90:$\check{b}$] {};
\draw [->, shorten >=3pt] (2.45,-.9) to [bend right=35] node[pos=0.3,above] {$f$}(3.7,.25);
\draw [<-, shorten >=3pt] (4,-.75) to [bend left=65] node[pos=0.3,below] {$f$}(3.25,-1);
\draw (.85,-1.15) node {$M^{i}_{0}$};
\draw (1.75,-1.15) node {$\dots M^{i}_{l}$};
\draw (2.35,-1.15) node {$M^{i}_{l+1}$};
\draw (-.45,-1.15) node {$(\bar M,\bar a,\bar N)^i$};
\node at (2.2,.25)[circle, fill, draw, label=315:$ a_{i}$] {};
\node at (1.2,.25)[circle, fill, draw, label=315:$a_{0}$] {};
\begin{scope}
  \clip (0,1) [rrect] rectangle (5,-1);
\draw (-.2,1) [rrect, xslant=-0.4, dotted] rectangle (4, -.75); 
\draw (-.2,1) [rrect, xslant=-0.4, dotted] rectangle (4.7, -.75); 
\end{scope}

\end{tikzpicture}
\caption{The construction of $(\bar M',\bar a,\bar N)$ (dotted) from $(\bar M,\bar a,\bar N)^i$  with $f$ which fixes $\Union_{\beta<\delta}M_\beta$.} \label{fig:reduced tower construction}
\end{figure}
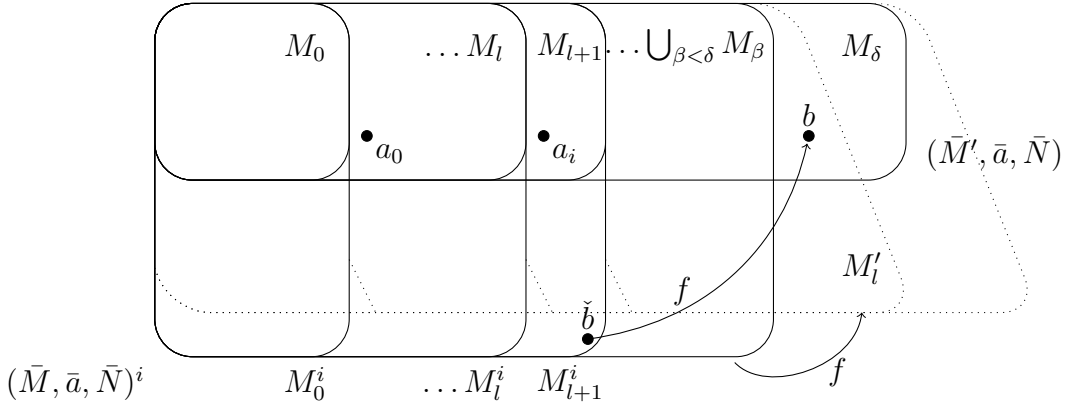
\end{proof}

\section{Symmetry and Reduced Towers}

In this section we introduce a version of symmetry and show that it is equivalent to a statement about reduced towers.

\begin{definition}\label{sym defn}
We say that an abstract elementary class that satisfies Assumption \ref{ss assm} exhibits \emph{symmetry for non-$\mu$-splitting for limit models} if  whenever models $M,M_0,N\in\K_\mu$ and elements $a$ and $b$  satisfy the conditions \ref{limit sym cond}-\ref{last} below, then there exists  $M^b$  a limit model over $M_0$, containing $b$, so that $\tp(a/M^b)$ does not $\mu$-split over $N$.  See Figure \ref{fig:sym}.
\begin{enumerate} 
\item\label{limit sym cond} $M$ is universal over $M_0$ and $M_0$ is a limit model over $N$.
\item\label{a cond}  $a\in M\backslash M_0$.
\item\label{a non-split} $\tp(a/M_0)$ is non-algebraic and does not $\mu$-split over $N$.
\item\label{last} $\tp(b/M)$ is non-algebraic and does not $\mu$-split over $M_0$. 
   
\end{enumerate}

\end{definition}

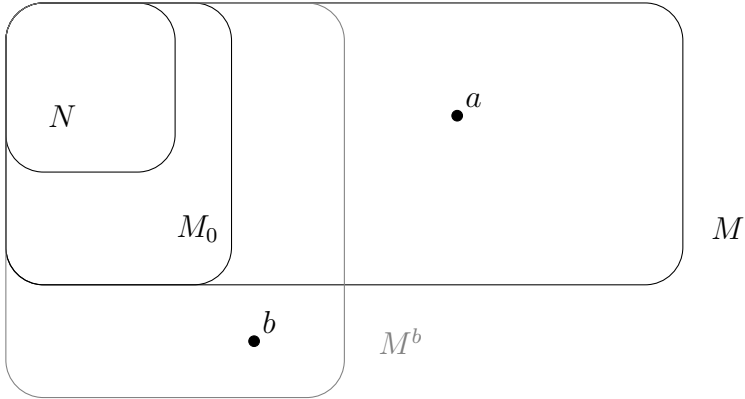
\begin{figure}[h]
\begin{tikzpicture}[rounded corners=5mm, scale=3,inner sep=.5mm]
\draw (0,1.25) rectangle (.75,.5);
\draw (.25,.75) node {$N$};
\draw (0,0) rectangle (3,1.25);
\draw (0,1.25) rectangle (1,0);
\draw (.85,.25) node {$M_0$};
\draw (3.2, .25) node {$M$};
\draw[color=gray] (0,1.25) rectangle (1.5, -.5);
\node at (1.1,-.25)[circle, fill, draw, label=45:$b$] {};
\node at (2,.75)[circle, fill, draw, label=45:$a$] {};
\draw[color=gray] (1.75,-.25) node {$M^{b}$};
\end{tikzpicture}
\caption{A diagram of the models and elements in the definition of symmetry. We assume the type $\tp(b/M)$ does not $\mu$-split over $M_0$ and $\tp(a/M_0)$ does not $\mu$-split over $N$.  Symmetry implies the existence of $M^b$ a limit model over $M_0$ so that $\tp(a/M^b)$  does not $\mu$-split over $N$.} \label{fig:sym}
\end{figure}

The conditions \ref{limit sym cond}-\ref{last} of Definition \ref{sym defn} can be materialized when we make Assumption \ref{ss assm}.  In fact these conditions arise naturally in the standard proofs that reduced towers are continuous.
To achieve conditions \ref{a cond} and \ref{a non-split}, notice that by Assumption \ref{ss assm} and by fixing $M_0$  a limit model,  for every $c\in M\backslash M_0$ there exists $N_c$ so that $M_0$ is a limit model over $N_c$ and $\tp(c/M_0)$ does not $\mu$-split over $N_c$.  So it is relatively easy to find $a$, $M$, $M_0$, and $N$ satisfying the conditions of the definition of $\mu$-symmetry over limit models.  We get examples of $b$ satisfying condition  \ref{last} from Assumption \ref{ss assm} as well by taking $M$ to be a limit model,

This version of symmetry differs slightly from other definitions of symmetry for non-forking in the literature (e.g. \cite{Sh 600} and \cite{BoGr}).  First of all, we only consider symmetry over  limit models.  Our definition of symmetry involves more subtle assumptions, but has a stronger conclusion.  In particular,
we require that $a$ is somewhat ``separated" from $N$ (i.e. that there is a limit model $M_0$ over $N$ inside $M$ which avoids $a$ and $\tp(a/M_0)$ does not $\mu$-split over $N$).  However, in the end we get a stronger conclusion: $\tp(a/M^b)$ does not $\mu$-split over $N$.  This is in contrast to other forms of symmetry (e.g. \cite{Sh 600} and \cite{BoGr}), where the authors could only conclude that $a$ is independent from $M^b$ over $M_0$.   For a detailed comparison of the various notions of symmetry, including this one, see \cite{VV}.

We now provide an equivalent formulation of symmetry for non-$\mu$-splitting over limit models using reduced towers, under the assumption that  $\K$ is $\mu$-superstable.

\begin{theorem}\label{symmetry theorem}
Assume $\K$ is an abstract elementary class satisfying superstability properties for $\mu$ (i.e. Assumption \ref{ss assm}).  Then the following are equivalent:
\begin{enumerate}[a.]
\item\label{sym} $\K$ has symmetry for non-$\mu$-splitting over limit models.
\item\label{red} If $(\bar M,\bar a,\bar N)\in\K^*_{\mu,\alpha}$ is a reduced tower, then $\bar M$ is a continuous sequence (i.e. for every limit ordinal $\beta<\alpha$, we have $M_\beta=\Union_{i<\beta}M_i$).

\end{enumerate}
\end{theorem}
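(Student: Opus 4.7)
The strategy is to prove the two implications separately, using the tower-extension machinery established in Section~2.

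For (\ref{sym}) $\Rightarrow$ (\ref{red}), I argue by contrapositive. Suppose $\T = (\bar M, \bar a, \bar N) \in \K^*_{\mu,\alpha}$ is a reduced tower whose sequence $\bar M$ fails to be continuous at some limit $\delta < \alpha$. By Proposition~\ref{monotonicity} I may shorten to $\alpha = \delta + 1$ and fix a witness $b \in M_\delta \setminus \Union_{j<\delta}M_j$. The plan is to construct a proper extension $\T' > \T$ with $b \in M'_{j^*+1}$ for some $j^* < \delta$; since $b \in \Union_{k<\alpha}M_k$ yet $b \notin M_{j^*+1}$, this directly contradicts Definition~\ref{reduced defn}. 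To produce such $j^*$, I apply the no-long-splitting-chains clause of Assumption~\ref{ss assm} (after first passing, via Fact~\ref{thm:extension for towers} if needed, to an extension of $\T$ in which consecutive tower models are universal over each other) together with base monotonicity of non-splitting to locate $j^* < \delta$ with $\tp(b/M_{j^*+1})$ not $\mu$-splitting over $M_{j^*}$. The data $(N_{j^*}, M_{j^*}, M_{j^*+1}, a_{j^*}, b)$ then verifies the hypotheses of Definition~\ref{sym defn}, so symmetry delivers a limit model $M^b$ over $M_{j^*}$ with $b \in M^b$ and $\tp(a_{j^*}/M^b)$ non-$\mu$-splitting over $N_{j^*}$. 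Splicing $M^b$ into $\T$ at index $j^*+1$ and then extending coherently through the remaining indices $j \in (j^*+1, \delta]$ via a directed-system construction (Facts~\ref{direct limit prop} and~\ref{limit stage prop}, together with Fact~\ref{splitting extension lemma} at each successor step to reposition $a_j$ so that $\tp(a_j/M'_j)$ does not $\mu$-split over $N_j$) yields the desired $\T'$.

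For (\ref{red}) $\Rightarrow$ (\ref{sym}), I argue by contradiction: suppose symmetry fails for some quintuple $(N, M_0, M, a, b)$ satisfying conditions \ref{limit sym cond}--\ref{last} of Definition~\ref{sym defn}. I will manufacture a reduced but discontinuous tower, contradicting~(\ref{red}). Build a tower $\T^0 \in \K^*_{\mu,\omega+1}$ by setting $M^0_0 := M_0$, $a_0 := a$, $N_0 := N$; taking $M^0_1$ to be a limit model over $M^0_0$ that extends $M$; for each $n \geq 1$ using $\mu$-superstability to pick $N_n$ with $M^0_n$ limit over $N_n$, an element $a_n \in M^0_{n+1}\setminus M^0_n$ with $\tp(a_n/M^0_n)$ non-$\mu$-splitting over $N_n$, and $M^0_{n+1}$ limit and universal over $M^0_n$ chosen to avoid $b$; finally taking $M^0_\omega$ to be a limit model over $\Union_{n<\omega}M^0_n$ that contains $b$. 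Apply Fact~\ref{density of reduced} to pass to a reduced extension $\T^1 \geq \T^0$. Hypothesis~(\ref{red}) forces continuity at $\omega$, so $b \in M^1_\omega = \Union_{n<\omega}M^1_n$ lies in some $M^1_{n^*}$. The model $M^b := M^1_{n^*}$ then serves as the desired witness: it is a limit model over $M_0$ containing $b$, and non-$\mu$-splitting of $\tp(a/M^b)$ over $N$ is obtained by propagating the non-splitting at the base $N_0 = N$ up through the tower (invoking invariance of non-splitting along with uniqueness of non-$\mu$-splitting extensions over limit-model bases). This contradicts the assumed failure of symmetry.

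The main obstacle in both directions is preserving the tower axioms under the modifications above: in the forward direction, extending $\T$ past the spliced $M^b$ while re-establishing the non-splitting of $\tp(a_j/M'_j)$ over $N_j$ for $j > j^*$; in the backward direction, arranging $\T^0$ so that $b$ stays out of every finite $M^0_n$ yet the non-splitting of $\tp(a/\cdot)$ over $N$ still propagates faithfully up the tower through $M^1_{n^*}$. Both are handled by iteratively invoking Fact~\ref{splitting extension lemma} within the directed-system framework of Facts~\ref{direct limit prop} and~\ref{limit stage prop}.
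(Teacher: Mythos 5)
Your high-level decomposition matches the paper's (both directions use density of reduced towers, monotonicity, and the directed-system machinery), but in each direction the argument breaks at the same critical point: keeping the designated element pinned down while the tower is being extended.

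For (\ref{sym})$\Rightarrow$(\ref{red}): you apply symmetry once at $j^*$ to obtain $M^b$, then propose to extend past index $j^*+1$ by the standard directed-system construction, ``invoking Fact~\ref{splitting extension lemma} at each successor step.'' This cannot work as stated. The automorphism supplied by Fact~\ref{splitting extension lemma} fixes only $M_j$ (the $j$th model of the ground tower), not $M^b$, so each successor step can move $b$; and the direct-limit maps $f_{j,\delta}$ from Fact~\ref{limit stage prop} are only guaranteed to fix $\Union_j M_j$. At the end you hold some image $f_{j^*+1,\delta}(b)$ inside the extension, not $b$ itself --- but $(*)_i$ of Definition~\ref{reduced defn} requires the literal element $b\in\Union_{j<\alpha}M_j$ to land in $M'_i\setminus M_i$. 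The paper's proof therefore re-applies symmetry at \emph{every} successor step, producing a fresh $M^b_{j+1}$ over $M^{j+1}_{j+1}$ that contains $b$, and at each limit stage uses the locality clause of Assumption~\ref{ss assm} together with uniqueness of non-$\mu$-splitting extensions to show the direct-limit image of $b$ has the same type as $b$ over the diagonal union, then composes with an automorphism $g$ to put $b$ back. None of this is recoverable from a single use of symmetry plus Fact~\ref{splitting extension lemma}.

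For (\ref{red})$\Rightarrow$(\ref{sym}): in building $\T^0$ you choose $N_n$ and $a_n$ arbitrarily for $n\geq 1$, whereas the paper sets $N_i:=N$ for \emph{every} $i$ and chooses each $a_i$ to realize the non-$\mu$-splitting extension of $\tp(a/M_0)$. This uniformity is not cosmetic. With your choices, the reduced continuous extension $\T^1$ only records non-splitting over $N$ at index $0$, and taking $M^b:=M^1_{n^*}$ fails outright when $n^*\geq1$: since $a=a_0\in M^0_1\prec M^1_{n^*}$, the type $\tp(a/M^b)$ is algebraic, so the conclusion is vacuous and no ``propagation'' is available. The paper's uniform choice makes $\tp(a_{j^*}/M'_{j^*})$ not $\mu$-split over $N$ and, via uniqueness of non-splitting extensions, gives $\tp(a_0/M'_0)=\tp(a_{j^*}/M'_0)$. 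This licenses an automorphism $f\in\Aut_{M_0}(\C)$ sending $a_{j^*}$ to $a_0$ and mapping $M$ into $M_1$; the hypothesis that $\tp(b/M)$ does not $\mu$-split over $M_0$ then yields a second automorphism $g$ fixing $f(M)$ with $g(f(b))=b$, and $M^b:=g(f(M'_{j^*}))$ is the required model. Without the repeated $N_i=N$ and the repeated copies $a_i$ of the non-splitting type, this automorphism argument --- which is what actually produces a limit model over $M_0$ containing $b$ but avoiding $a$ --- is unavailable.
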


\begin{proof}
We first show (\ref{sym})$\longrightarrow $(\ref{red}).  Suppose $\K$ has symmetry for non-$\mu$-splitting over limit models, but reduced towers are not necessarily continuous.
 Let $(\bar M,\bar a,\bar N)\in\K^*_{\mu,\alpha}$ be a discontinuous reduced tower of minimal length, $\alpha$.  Notice that by Proposition \ref{monotonicity}, we can conclude that $\alpha=\delta+1$ for some limit ordinal $\delta$ and that the failure of continuity must occur at $\delta$.
Let $b\in M_\delta\backslash \Union_{i<\delta}M_i$ witness the discontinuity of the tower.  

By the minimality of $\alpha$ and the density of reduced towers, we can construct a $<$-increasing and continuous chain of reduced, continuous towers $\langle \T^i=(\bar M,\bar a,\bar N)^i\in\K^*_{\mu,\delta}\mid i<\delta\rangle$ with $(\bar M,\bar a,\bar N)^0:=(\bar M,\bar a,\bar N)\restriction \delta$. 
The successor stage of the construction is possible by the density of reduced towers (Fact \ref{density of reduced}) and the limit stages are covered by Fact \ref{union of reduced is reduced}.
By $\delta$-applications of Fact \ref{density of reduced} and Fact \ref{union of reduced is reduced} inbetween successor stages of the construction we can require that for $\beta<\delta$
\begin{equation}\label{limit at successor}
M^{i+1}_{\beta}\text{ is a }(\mu,\delta)\text{-limit over }N_{\beta}.
\end{equation}
Let $\displaystyle{M^\delta_\delta:=\Union_{i<\delta,\;\beta<\delta}M^i_\beta}$.  See Figure \ref{fig:Mdeltas} replacing $\check M$ by $M^\delta_\delta$.  

There are two cases: $1)$ we have $b\in M^\delta_\delta$ and $2)$ we have $b\notin M^\delta_\delta$.  

{\sc Case 1):}  If $b\in M^\delta_\delta$, then we will have found an extension of $(\bar M,\bar a,\bar N)\restriction\delta$  containing $b$ (namely $(\bar M,\bar a,\bar N)^\delta)$) which can easily be lengthened to a discontinuous extension of the entire $(\bar M,\bar a,\bar N)$ tower by taking the $\delta^{th}$ model to be some extension of $M^\delta_\delta$ which is also a limit model universal over  $M_\delta$.  This discontinuous extension of  $(\bar M,\bar a,\bar N)$ along with $b$ witness that $(\bar M,\bar a,\bar N)$ cannot be reduced.

{\sc Case 2):} $b\notin M^\delta_\delta$.  Then $\tp(b/M^\delta_\delta)$ is non-algebraic, and 
by the superstability assumptions there exists $i^*<\alpha$ so that $\tp(b/M^\delta_\delta)$ does not $\mu$-split over $M^{i^*}_{i^*}$.  By monotonicity of non-splitting, we may assume that $i^*$ is a successor and thus by $(\ref{limit at successor})$, $M^{i^*}_{i^*}$ is a $(\mu,\delta)$-limit over $N_{i^*}$.
Now, referring to the Figure \ref{fig:sym}, apply symmetry to $a_{i^*}$ standing in for $a$, $M^{i^*}_{i^*}$ representing  $M_0$,  $N_{i^*}$ as $N$, $M^\delta_\delta$ as $M$, and $b$ as itself.  We can conclude that there exists  $M^b$ containing $b$, a limit model over $M^{i^*}_{i^*}$, for which $tp(a_{i^*}/M^b)$ does not $\mu$-split over $N_{i^*}$.  

Our next step is to consider the tower formed by the diagonal elements in Figure \ref{fig:Mdeltas}.  In particular let $\T^{diag}$ be the 
tower in $\K^*_{\mu,\delta}$ extending $\T\restriction \delta$ whose models are $M^i_i$ for each $i<\delta$.

Define the tower $\T^b\in\K^*_{\mu,i^*+2}$ by the sequences $\bar a\restriction (i^*+1)$, $\bar N\restriction (i^*+1)$ and $\bar M'$ with $M'_j:=M^j_j$ for $j\leq i^*$ and $M'_{i^*+1}:=M^b$.  Notice that $\T^b$ is an extension of $\T^{diag}\restriction(i^*+2)$ containing $b$.  We will explain how we can use this tower to find a tower $\mathring\T^\delta\in\K^*_{\mu,\delta}$ extending $\T^{diag}$ with $b\in \Union_{j<\delta}\mathring M^\delta_{j}$.  This will be enough to contradict our assumption that $\T$ was reduced.

We define  $\langle \mathring\T^j, f_{j,k}\mid i^*+2\leq j\leq k\leq\delta\rangle$ a directed system of towers so that for $j \geq i^*+2$
\begin{enumerate}
\item\label{base} $\mathring\T^{i^*+2}=\T^b$
\item $\mathring\T^j\in\K^*_{\mu,j}$ for $j\leq\delta$
\item $\T^{diag}\restriction j \leq\mathring\T^j$ for $j\leq\delta$
\item $f_{j,k}(\mathring\T^j)\leq\mathring\T^k\restriction j$ for $j\leq k<\delta$
\item\label{id condition} $f_{j,k}\restriction M^{j}_j=id_{M^{j}_j}$ $j\leq k<\delta$
\item\label{limit M'} $\mathring M^{j+1}_{j+1}$ is universal over $f_{j,j+1}(\mathring M^j_j)$ for $j<\delta$
\item\label{b in} $b\in\mathring M^{j}_{j}$ for $j\leq\delta$
\item\label{non splitting} $\tp(f_{j,k}(b)/M^{k}_{k})$ does not $\mu$-split over $M^{i^*}_{i^*}$ for $j<k<\delta$.
\end{enumerate}

We will define this directed system by induction on $k$, with $i^*+2\leq k\leq\alpha$.  The base case $i^*+2$ is determined by condition \ref{base}.  To cover the successor case, suppose that 
$k=j+1$.  
By our choice of $i^*$, we have $\tp(b/\Union_{l<\alpha}M^{l}_l)$ does not $\mu$-split over $M^{i^*}_{i^*}$. 
So in particular by monotonicity of non-splitting, we notice:
\begin{equation}\label{Mjj non-split}
\tp(b/M^{j+1}_{j+1})\text{ does not }\mu\text{-split over }M^{i^*}_{i^*}.
\end{equation} 
 Using the definition of towers, the choice of $i^*$, and the fact that $M^{j+1}_{j+1}$ was chosen to be a $(\mu,\delta)$-limit over $N_{j+1}$, we can apply symmetry to $a_{j+1}$, $M^{j+1}_{j+1}$, $ \Union_{l<\delta}M^{l}_l$, $b$ and $N_{j+1}$ which will yield $M^b_{j+1}$ a  limit model over $M^{j+1}_{j+1}$ containing $b$ 
 so that $\tp(a_{j+1}/M^b_{j+1})$ does not $\mu$-split over $N_{j+1}$ (see Figure \ref{fig:successor}).

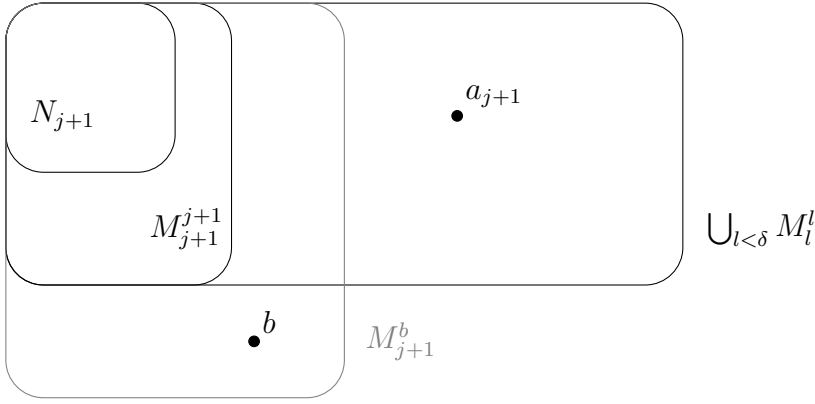
\begin{figure}[h]
\begin{tikzpicture}[rounded corners=5mm, scale=3,inner sep=.5mm]
\draw (0,1.25) rectangle (.75,.5);
\draw (.25,.75) node {$N_{j+1}$};
\draw (0,0) rectangle (3,1.25);
\draw (0,1.25) rectangle (1,0);
\draw (.8,.25) node {$M^{j+1}_{j+1}$};
\draw (3.35, .25) node {$\Union_{l<\delta}M^{l}_l$};
\draw[color=gray] (0,1.25) rectangle (1.5, -.5);
\node at (1.1,-.25)[circle, fill, draw, label=45:$b$] {};
\node at (2,.75)[circle, fill, draw, label=45:$a_{j+1}$] {};
\draw[color=gray] (1.75,-.25) node {$M^{b}_{j+1}$};
\end{tikzpicture}
\caption{A diagram of the application of symmetry in the successor stage of the directed system construction in the proof of (\ref{sym})$\longrightarrow$(\ref{red}) in Theorem \ref{symmetry theorem}. We have $\tp(b/ \Union_{l<\delta}M^{l}_l)$ does not $\mu$-split over $M^{j+1}_{j+1}$ and $\tp(a_{j+1}/M^{j+1}_{j+1})$ does not $\mu$-split over $N_{j+1}$.  Symmetry implies the existence of  $M^b$ a limit model over $M^{j+1}_{j+1}$. so that $\tp(a_{j+1}/M^b)$  does not $\mu$-split over $N_{j+1}$.} \label{fig:successor}
\end{figure}

Fix $M'$ to be a model of cardinality $\mu$ extending  both $\mathring M^j_j$ and $M^{j+1}_{j+1}$
Since $M^b_{j+1}$ is a limit model over $M^{j+1}_{j+1}$, there exits $f_{j,j+1}:M'\rightarrow M^b_{j+1}$ with $f_{j,j+1}=id_{M^{j+1}_{j+1}}$ so that $M^b_{j+1}$ is also universal over $f_{j,j+1}(\mathring M^j_j)$.  Notice that condition \ref{non splitting} of the construction is satisfied because of (\ref{Mjj non-split}), invariance, and our choice of $f_{j,j+1}\restriction M_{j+1}^{j+1}=\id$.
Therefore, it is easy to check that $\mathring \T^{j+1}$ defined by the models $\mathring M^{j+1}_l:=f_{j,j+1}(\mathring M^j_l)$ for $l\leq j$ and $\mathring M^{j+1}_{j+1}:=M^b_{j+1}$ are as required.
Then the rest of the directed system can be defined by the induction hypothesis and the mappings $f_{l,j+1}:=f_{l,j}\circ f_{j,j+1}$ for $i^*+2\leq l<j$.

Now consider the limit stage  $k$ of the construction.  First,  let $\grave \T^k$ and $\langle\grave f_{j,k}\mid i^*+2\leq j<k\rangle$ be a direct limit of the system defined so far.  We use the $\grave{}$ notation since these are only approximations to the tower and mappings that we are looking for.  We will have to take some care to find a direct limit that contains $b$ in order to satisfy Condition \ref{b in} of the construction.
By Fact \ref{limit stage prop} and our induction hypothesis, we may choose this direct limit so that for all $j<k$
\begin{equation*}
\grave f_{j,k}\restriction M^{j}_j=id_{M^{j}_j}.
\end{equation*}
Consequently $\grave M^\alpha_j:=\grave f_{j,k}(\mathring M^j_j)$ is universal over $M^{j}_j$, and $\Union_{j<k}\mathring M^k_j$ is a limit model witnessed by condition \ref{limit M'} of the construction.  Additionally, because $\T^{diag}\restriction k$  is continuous,
 the tower  $\grave\T^k$   composed of the models $\grave M^k_j$, extends $\T^{diag}\restriction k$.

We will next show that for every $j<k$,
\begin{equation}\label{limit non split eqn}
\tp(\grave f_{i^*+2,k}(b)/M^j_j)\text{ does not }\mu\text{-split over }M^{i^*}_{i^*}.
\end{equation}
To see this, recall that for every $j<k$, by the definition of a direct limit, $\grave f_{i^*+2,k}(b)=\grave f_{j,k}(f_{i^*+2,j}(b))$.
By condition \ref{non splitting} of the construction, we know
\begin{equation*}
\tp(f_{i^*+2,j}(b)/M^{j}_{j})\text{ does not }\mu\text{-split over }M^{i^*}_{i^*}.
\end{equation*}
Applying $\grave f_{j,k}$ to this implies $\tp(\grave f_{i^*+2,k}(b)/M^j_j)$ does not $\mu$-split over $M^{i^*}_{i^*}$, establishing $(\ref{limit non split eqn})$.

Because $M^{j+1}_{j+1}$ is universal over $M^j_j$ by construction, we can apply
Assumption \ref{ss assm} to $(\ref{limit non split eqn})$ yielding
\begin{equation}\label{grave f}
\tp(\grave f_{i^*+2,k}(b)/\Union_{j<k}M^j_j)\text{ does not }\mu\text{-split over }M^{i^*}_{i^*}.
\end{equation}

Because $\grave f_{i^*+2,k}$ fixes $M^{i^*+1}_{i^*+1}$,  $\tp(b/M^{i^*+1}_{i^*+1})=\tp(\grave f_{i^*+2,k}(b)/M^{i^*+1}_{i^*+1})$.
We can then apply the uniqueness of non-splitting extensions to $(\ref{grave f})$ to see that  $\tp(\grave f_{i^*+2,k}(b)/\Union_{j<k}M^j_j)=\tp(b/\Union_{j<k}M^j_j)$.  Thus we can fix $g$ an automorphism of the monster model fixing $\Union_{j<k}M^j_j$ so that $g(\grave f_{i^*+2,k}(b))=b.$

We will then define $\mathring \T^k$ to be the tower $g(\grave\T^k)$ and the mappings for our directed system will be $f_{j,k}:=g\circ\grave f_{j,k}$ for all $ i^*+2\leq j<k$.
This completes the construction.

Now that we have $\mathring\T^\delta$ a tower extending $\T\restriction\delta$ which contains $b$, we are in a situation 
similar to the proof in case $1)$.  
To contradict that $\T$ is reduced, 
we need only 
lengthen $\mathring\T^\delta$ to a discontinuous extension of the entire $(\bar M,\bar a,\bar N)$ tower by taking the $\delta^{th}$ model to be some extension of $\Union_{i<\delta}\mathring M^i_i$ which is also universal over  $M_\delta$.  This discontinuous extension of  $(\bar M,\bar a,\bar N)$ along with $b$ witness that $(\bar M,\bar a,\bar N)$ cannot be reduced.

%
%
%

Next we show (\ref{red})$\longrightarrow$ (\ref{sym}).  
As in the definition of $\mu$-symmetry over limit models, 
suppose $M$ is universal over $M_0$ and $M_0$ is a limit model over $N$.
 Fix $b$ so that the non-algebraic $\tp(b/M)$ does not $\mu$-split over $N$.  Fix $a\in M\backslash M_0$. Without loss of generality, by monotonicity of non-splitting, we may assume that $M$ is a limit model over $M_0$.  Let $\langle M_i\mid i<\delta\rangle$ witness this. We can arrange that $M_{i+1}$ is a limit model over $M_i$ and $a\in M_1$.  We will find $M^b$ a limit model over $M_0$ containing $b$ and extending $N$ so that $\tp(a/M^b)$ does not $\mu$-split over $N$.  

We start out by  building a tower of length $\delta+1$.  Use the models in the sequence $\langle M_i\mid i<\delta\rangle$ as the first part of the tower, and define $M_{\delta}$ to be some limit model extending $M$ containing $b$.  
Set $a_0:=a$, and for $0<i<\delta$ choose $a_i\in M_{i+1}\backslash M_i$ realizing the extension of $\tp(a/M_0)$ to $M_i$ that does not $\mu$-split over $N$.
Then set  $N_i:=N$ for each $i$.  Refer to the tower of length $\delta+1$ defined this way as $\mathcal T$. 

Notice that $\mathcal T$ is discontinuous at $\delta$; therefore by our assumption, it is not reduced.  
By Fact \ref{density of reduced} we can find $\mathcal T'$ extending $\mathcal T$ so that $\mathcal T'$ is reduced.  By our hypothesis (\ref{red}), we know that $\mathcal T'$ must be continuous.
  Since $b$ appears in this continuous tower,  there exists $j<\delta$ so that $b\in M'_{j}$.  Fix the minimal such $j$ and denote it by $j^*$.  There are two cases to consider.

{\sc Case 1:}  $j^*=0$.  By definition of the ordering on towers, since $\mathcal T<\mathcal T'$, we know that $\tp(a_{0}/M'_{0})$ does not $\mu$-split over $N$.  Thus $M'_{0}$ is as required.

{\sc Case 2:} $j^*>0$.  
By the choice of $a_j$ and uniqueness of non-splitting extensions, we know $\tp(a_{0}/M'_{0})=\tp(a_{j^*}/M'_{0})$.  Thus, there exists $f\in\Aut_{M_0}(\C)$ with $f(a_{j^*})=a_{0}$.  Since $M_1$ is universal over $M_0$, we can also require that our choice of $f$ has the property that $f\restriction M:M \rightarrow_{M_0} M_{1}$.
Because $\tp(b/M)$ does not $\mu$-split over $N$, we know 
\begin{equation*}\label{b non-split}
\tp(f(b)/f(M))=\tp(b/f(M)).
\end{equation*}
This implies there exists an automorphism $g$ of the monster model fixing $f(M)$ so that $g(f(b))=b$.  

We claim that $M^b:=g(f(M'_{j^*}))$ is as required.  First notice that $b\in M^b$ since $f(b)\in f(M'_{j^*})$ and $g(f(b))=b$.  Next we need to check that $\tp(a_{0}/M^b)$ does not $\mu$-split over $N$.  By the definition of towers, 
\begin{equation*}\label{non-split j^*} \tp(a_{j^*}/M'_{j^*})\text{ does not }\mu\text{-split over }N_{j^*}(=N).  \end{equation*}
By invariance and by our choice of $f$ and $g$ fixing $N$ with $g(f(M'_{j^*}))=M^b$, we can conclude that  
\begin{equation*}
\tp(g(f(a_{j^*}))/M^b)\text{ does not }\mu\text{-split over }N.
\end{equation*} 
By our choice of $f$ taking $a_{j^*}$ to $a_{0}$, we get 
\begin{equation}\label{non-split gf} \tp(g(a_{0})/M^b)\text{ does not }\mu\text{-split over }N.  \end{equation}
Because $g$ fixes $f(M)$ and $a_{0}=f(a_{j^*})\in f(M)$, $(\ref{non-split gf})$ implies that $\tp(a_{0}/M^b)$ does not $\mu$-split over $N$ as required.

\end{proof}

\section{Concluding Remarks}

Corollary \ref{categoricity corollary} appeared in a previous version of this paper, and we thank Sebastien Vasey for pointing out a clearer proof that is included below.

\begin{proof}[Proof of Corollary \ref{categoricity corollary}]

Notice that the results in \cite{Sh 394} demonstrate that categoricity in $\mu^+$ implies that the assumptions of this paper are met.   Namely $\K$ is stable in $\mu$ and non-splitting satisfies the continuity and locality properties for $\mu$.  
By stability in $\mu$, the model of cardinality $\mu^+$ is saturated.  Therefore by categoricity in $\mu^+$, the hypotheses of Theorem \ref{transfer theorem} are met, and by Theorem \ref{symmetry theorem}, we get $\mu$-symmetry over limit models.

\end{proof}
In \cite{Va3}, where symmetry is further explored, we weaken the assumption in Corollary \ref{categoricity corollary} that the categoricity cardinal $\lambda=\mu^+$.

%
%

Building on much of Shelah's work including \cite{Sh book}, Vasey derives a non-forking relation without assuming tameness for  $\mu^+$-categorical abstract elementary classes that satisfy the amalgamation property and have no maximal models  (Theorem 1.2 and section 3 of \cite{V2}).  Vasey points out that Corollary \ref{categoricity corollary} contributes to his work since it implies that the non-forking relation additionally satisfies symmetry over limit models.  
In the terminology of his paper, we state this result.

\begin{corollary}
Suppose that $\K$ is an abstract elementary class that is categorical in a sufficiently large $\mu^+$, satisfies the amalgamation and joint embedding properties, and has no maximal models. 
Then the relation ``explicitly non-forking" satisfies symmetry over limit models. 
\end{corollary}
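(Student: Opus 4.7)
The plan is to reduce the statement to Corollary \ref{categoricity corollary} and then translate its conclusion across Vasey's construction of the explicitly non-forking relation from \cite{V2}. First I would verify the hypotheses of Corollary \ref{categoricity corollary}: by the results in \cite{Sh 394} cited in the proof just above, categoricity in a sufficiently large $\mu^+$ (together with amalgamation, joint embedding, and no maximal models) forces $\K$ to be $\mu$-stable, gives the locality and no-long-splitting-chains properties of $\mu$-splitting needed for Assumption \ref{ss assm}, and makes the unique model of cardinality $\mu^+$ saturated, so that any union of $\mu^+$-saturated models is automatically saturated. Hence Corollary \ref{categoricity corollary} applies and yields $\mu$-symmetry for non-$\mu$-splitting over limit models.

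Next, I would unpack the definition of ``explicitly non-forking'' from Section 3 of \cite{V2}. In that construction, a type $p\in\gaS(M)$ explicitly does not fork over a model $N$ precisely when $N$ sits inside a limit submodel of $M$ and $p$ does not $\mu$-split over some witnessing limit submodel attached to $N$ (together with a continuity/extension clause that is automatic under Assumption \ref{ss assm}). Thus an explicitly non-forking statement can always be rewritten, up to choosing an appropriate limit witness, as a non-$\mu$-splitting statement of exactly the shape fed into Definition \ref{sym defn}.

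The main step is then the translation itself. Suppose $M, M_0, N, a, b$ are given in the format required for symmetry of explicitly non-forking over limit models: $M$ universal over $M_0$, $M_0$ limit over $N$, $\tp(a/M_0)$ and $\tp(b/M)$ non-algebraic and explicitly non-forking over $N$ and $M_0$ respectively. Unpacking the witnesses of explicit non-forking, I would arrange that $\tp(a/M_0)$ does not $\mu$-split over $N$ and $\tp(b/M)$ does not $\mu$-split over $M_0$, which is precisely the input of Definition \ref{sym defn}. Applying symmetry for non-$\mu$-splitting over limit models (delivered by Corollary \ref{categoricity corollary}) supplies $M^b$ a limit model over $M_0$ containing $b$ with $\tp(a/M^b)$ not $\mu$-splitting over $N$; this $M^b$ now witnesses that $\tp(a/M^b)$ explicitly does not fork over $N$, which is the symmetry conclusion in Vasey's terminology.

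The only real obstacle I anticipate is bookkeeping: one must check that the particular limit-model witnesses chosen when translating ``explicitly non-forking'' into ``non-$\mu$-splitting'' are compatible on both sides (i.e.\ that the same $N$ and $M_0$ serve as witnesses in both formulations) and that no auxiliary extension-property or continuity clause of the explicit non-forking relation is violated by the resulting $M^b$. Both checks are routine from the invariance and uniqueness properties of non-splitting extensions (Fact \ref{splitting extension lemma}) together with the fact that $M^b$ produced in Definition \ref{sym defn} is itself a limit model.
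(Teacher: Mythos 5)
Your argument is correct and follows exactly the route the paper intends: the paper gives no separate proof of this corollary but simply invokes Corollary~\ref{categoricity corollary} together with Vasey's Theorem 1.2 and Section 3 of \cite{V2} to recast the conclusion in the ``explicitly non-forking'' terminology. Your unpacking of Vasey's definition (explicit non-forking over a base as non-$\mu$-splitting over a limit submodel with a universal witness) and the bookkeeping check that the witnesses line up on both sides supply precisely the details the paper leaves implicit.
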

In later sections of his paper \cite{V2}, Vasey derives a  symmetric non-forking relation without requiring that the categoricity cardinal is a successor \cite{V2}, but at the expense of   assuming tameness.  

As mentioned in the introduction, the results of this paper have been used to make further progress on the understanding of categorical, $\mu$-superstable, and even strictly $\mu$-stable AECs.  We refer the reader to a few applications and extensions of the results  presented here: \cite{VV}, \cite{Va3}, \cite{BV}, and \cite{Vas-categoricity}.

On a final note, with a little care, the arguments presented in this paper can also be carried out in the context of \cite{Va2} where amalgamation is not assumed \cite{V-Volume}.





\bibliographystyle{model1-num-names}
\bibliography{<your-bib-database>}



\end{document}